\newtheorem{theorem}{Theorem}[section]
\newtheorem{lemma}[theorem]{Lemma}
\newtheorem{corollary}[theorem]{Corollary}
\theoremstyle{definition}
\DeclareMathAlphabet{\mathpzc}{OT1}{pzc}{m}{it}
\numberwithin{equation}{section}
\begin{document}

	\title[Trace Characterizations and Socle Identifications]{Trace Characterizations and Socle Identifications in Banach Algebras}
	\author{G. Braatvedt, R. Brits \and F. Schulz}
	\address{Department of Mathematics, University of Johannesburg, South Africa}
	\email{gabraatvedt@uj.ac.za, rbrits@uj.ac.za,  francoiss@uj.ac.za }
	\subjclass[2010]{15A60, 46H05, 46H10, 46H15, 47B10 }
	\keywords{rank, socle, trace }

\begin{abstract}
As a follow-up to a paper of D. Petz and J. Zem\'{a}nek \cite{charactertracezem}, a number of equivalent conditions which characterize the trace among linear functionals on matrix algebras, finite rank operators and the socle elements of semisimple Banach algebras in general are given. Moreover, the converse problem is also addressed; that is, given the equivalence of certain conditions which characterize the trace, what can be said about the structure of the socle? In particular, we characterize those socles isomorphic to matrix algebras in this manner, as well as those socles which are minimal two-sided ideals.
\end{abstract}
	\parindent 0mm
	
	\maketitle

\section{Introduction}
The determinants of infinite matrices were first investigated by astronomers, over a century ago. Nowadays, the notions of rank, trace and determinant are well-established for operator theory. Recently, in their paper entitled \textit{Trace and determinant in Banach algebras} \cite{aupetitmoutontrace}, Aupetit and Mouton managed to show that these notions can be developed, without the use of operators, in a purely spectral and analytic manner. This paper is fundamental to our discussion here, for this alternative point of view not only permits the possibility to consider trace related problems in a more general setting, but also allows for the consideration of a converse problem to \cite[Theorem 2]{charactertracezem}. We briefly summarize some of the theory in \cite{aupetitmoutontrace} before we proceed.\\

By $A$ we denote a complex Banach algebra with identity element $\mathbf 1$ and invertible group $G(A)$. Moreover, it will be assumed throughout that $A$ is semisimple (i.e. the Jacobson Radical of $A$, denoted $\mathrm{Rad}\:A$, only contains $0$). For $x\in A$ we denote by $\sigma_A(x) =\left\{\lambda\in\mathbb C:\lambda\mathbf{1}-x\notin G(A)\right\}$, $\rho_{A} (x) = \sup\left\{\left|\lambda\right| : \lambda \in \sigma_{A} (x)\right\}$ and $\sigma_{A} '(x)=\sigma_A(x) -\{0\}$ the spectrum, spectral radius and nonzero spectrum of $x$, respectively. If the underlying algebra is clear from the context, then we shall agree to omit the subscript $A$ in the notation $\sigma_A(x)$, $\rho_{A} (x)$ and $\sigma_{A} '(x)$. This convention will also be followed in the forthcoming definitions of rank and trace. For each nonnegative integer $m$, let
$$\mathcal{F}_{m} = \left\{a \in A: \#\sigma'(xa) \leq m \;\,\mathrm{for\;all}\;\,x \in A \right\},$$
where the symbol $\#K$ denotes the number of distinct elements in a set $K\subseteq \mathbb C$. Following Aupetit and Mouton in \cite{aupetitmoutontrace}, we define the \textit{rank} of an element $a$ of $A$ as the smallest integer $m$ such that $a \in \mathcal{F}_{m}$, if it exists; otherwise the rank is infinite. In other words,
$$\mathrm{rank}\,(a) = \sup_{x \in A} \#\sigma'(xa).$$
If $a \in A$ is a finite-rank element, then
$$E(a) = \left\{x \in A : \#\sigma'(xa) = \mathrm{rank}\,(a) \right\}$$
is a dense open subset of $A$ \cite[Theorem 2.2]{aupetitmoutontrace}. A finite-rank element $a$ of $A$ is said to be a \textit{maximal finite-rank element} if $\mathrm{rank}\,(a) = \#\sigma'(a)$. With respect to $\mathrm{rank}$ it is further useful to know that $\sigma ' (xa)=\sigma '(ax)$ for all $x, a \in A$ (Jacobson's Lemma, \cite[Lemma 3.1.2.]{aupetit1991primer}). It can be shown \cite[Corollary 2.9]{aupetitmoutontrace} that the socle, written $\mathrm{Soc}\:A$, of a semisimple Banach algebra $A$ coincides with the collection $\bigcup_{m = 0}^{\infty} \mathcal{F}_{m}$ of finite rank elements. We mention a few elementary properties of the rank of an element \cite[p. 117]{aupetitmoutontrace}. Firstly, $\#\sigma'(a) \leq \mathrm{rank}\,(a)$ for all $a \in A$. Furthermore, $\mathrm{rank}\,(xa) \leq \mathrm{rank}\,(a)$ and $\mathrm{rank}\,(ax)\leq \mathrm{rank}\,(a)$ for all $x, a \in A$, with equality if $x \in G(A)$. Moreover, the rank is lower semicontinuous on $\mathrm{Soc}\:A$. It is also subadditive, i.e. $\mathrm{rank}\,(a+b) \leq \mathrm{rank}\,(a) + \mathrm{rank}\,(b)$ for all $a, b \in A$ \cite[Theorem 2.14]{aupetitmoutontrace}. Finally, if $p$ is a projection of $A$, then $p$ has rank one if and only if $p$ is a minimal projection, that is $pAp = \mathbb{C}p$. It is also worth mentioning here that a projection $p$ is minimal if and only if $Ap$ is a nontrivial left ideal which does not properly contain any left ideals other than $\left\{0 \right\}$, that is, if and only if $Ap$ is a nontrivial minimal left ideal \cite[Lemma 30.2]{bonsall1973complete}. A similar result holds true for the right ideal $pA$.\\

The following two results are fundamental to the theory developed in \cite{aupetitmoutontrace} and are mentioned here for convenient referencing later on:\\

\textbf{Scarcity Theorem for Rank} \cite[Theorem 2.3]{aupetitmoutontrace}: Let $f$ be an analytic function from a domain $D$ of $\mathbb{C}$ into $A$. Then either the set of $\lambda$ for which the rank of $f(\lambda)$ is finite has zero capacity or there exist an integer $N$ and a closed discrete subset $E$ of $D$ such that $\mathrm{rank}\,(f(\lambda)) = N$ on $D-E$ and $\mathrm{rank}\,(f(\lambda)) < N$ on $E$.\\

\textbf{Diagonalization Theorem} \cite[Theorem 2.8]{aupetitmoutontrace}: Let $a \in A$ be a nonzero maximal finite-rank element and denote by $\lambda_{1}, \ldots, \lambda_{n}$ its nonzero distinct spectral values. Then there exists $n$ orthogonal minimal projections $p_{1}, \ldots, p_{n}$ such that 
$$a = \lambda_{1}p_{1} + \cdots + \lambda_{n}p_{n}.$$

If $a \in \mathrm{Soc}\:A$ we define the \textit{trace} of $a$ as in \cite{aupetitmoutontrace} by
$$\mathrm{Tr}\,(a) = \sum_{\lambda \in \sigma (a)} \lambda m\left(\lambda, a\right),$$
where $m(\lambda,a)$ is the \emph{multiplicity of $a$ at $\lambda$}. A brief description of the notion of multiplicity in the abstract case goes as follows (for particular details one should consult \cite{aupetitmoutontrace}): Let $a \in \mathrm{Soc}\:A$, $\lambda\in\sigma(a)$ and let $B(\lambda,r)$ be an open disk centered at $\lambda$ such that $B(\lambda,r)$ contains no other points of $\sigma(a)$. It can be shown \cite[Theorem 2.4]{aupetitmoutontrace} that there exists an open ball, say $U\subseteq A$, centered at $\mathbf{1}$ such that $\#\left[\sigma(xa)\cap B(\lambda,r) \right]$ is constant as $x$ runs through $E(a)\cap U$. This constant integer is the multiplicity of $a$ at $\lambda$. It can also be shown that $m\left(\lambda,a\right) \geq 1$ and
\begin{equation}
\sum_{\alpha \in \sigma (a)} m(\alpha, a) = \left\{\begin{array}{cl} 1+\mathrm{rank}\,(a) & \mathrm{if}\;\,0 \in \sigma (a) \\
\mathrm{rank}\,(a) & \mathrm{if}\;\,0 \notin \sigma (a).
\end{array}\right.
\label{eq0}
\end{equation}

Let $\lambda \in \sigma (a)$ and suppose that $B(\lambda,2r)$ separates $\lambda$ from the rest of the spectrum of $a$. Let $f_{\lambda}$ be the holomorphic function which takes the value $1$ on $B(\lambda,r)$ and the value $0$ on $\mathbb{C} - \overline{B}(\lambda,r)$. If we now let $\Gamma_{0}$ be a smooth contour which surrounds $\sigma (a)$ and is included in the domain of $f_{\lambda}$, then
$$p\left(\lambda, a\right) = f_{\lambda} (a) = \frac{1}{2\pi i}\int_{\Gamma_{0}} f_{\lambda} (\alpha) \left(\alpha \mathbf{1}-a\right)^{-1}\,d\alpha$$
is referred to as the \textit{Riesz projection} associated with $a$ and $\lambda$. By the Holomorphic Functional Calculus, Riesz projections associated with $a$ and distinct spectral values are orthogonal and for $\lambda \neq 0$
\begin{equation}
p\left(\lambda, a\right) = \frac{a}{2\pi i} \int_{\Gamma_{0}} \frac{f_{\lambda}\left(\alpha\right)}{\alpha} \left(\alpha\mathbf{1} - a\right)^{-1}\,d\alpha \in aA. \label{eq1}
\end{equation}
The following results will also be useful: Let $a \in A$ have finite rank and let $\lambda_{1}, \ldots, \lambda_{n}$ be nonzero distinct elements of its spectrum. If 
$$p = p\left(\lambda_{1}, a\right) + \cdots + p\left(\lambda_{n}, a\right),$$ then by \cite[Theorem 2.6]{aupetitmoutontrace} we have
$$\mathrm{rank}\,(p) = m\left(\lambda_{1}, a\right) + \cdots + m\left(\lambda_{n}, a\right).$$
Moreover, $\mathrm{rank}\,(p) = m\left(1, p\right)$ \cite[Corollary 2.7]{aupetitmoutontrace}. It is customary to refer to $p$ here as the Riesz projection associated with $a$ and $\lambda_{1}, \ldots , \lambda_{n}$.\\

In the operator case, $A=B(X)$ (bounded linear operators on a Banach space $X$), the ``spectral" rank and trace both coincide with the respective classical operator definitions.

\section{Preliminaries}

Let $a \in \mathrm{Soc}\:A$. From (\ref{eq0}), it readily follows that
$$\sum_{\lambda \in \sigma' (a)} m\left(\lambda, a\right) \leq \mathrm{rank}\,(a).$$
Consequently, as observed in \cite{aupetitmoutontrace}, we have that
\begin{eqnarray*}
	\left|\mathrm{Tr}\,(a)\right| &=& \left|\sum_{\lambda \in \sigma (a)}\lambda m\left(\lambda, a\right)\right|
	= \left|\sum_{\lambda \in \sigma' (a)}\lambda m\left(\lambda, a\right)\right| \\
	& \leq & \sum_{\lambda \in \sigma' (a)}\left|\lambda \right| \cdot m\left(\lambda, a\right) 
	\leq  \sum_{\lambda \in \sigma' (a)}\rho (a) \cdot m\left(\lambda, a\right) \\
	& = & \rho (a)\cdot{\sum_{\lambda \in \sigma' (a)} m\left(\lambda, a\right)} 
	\leq  \rho (a)\cdot{\mathrm{rank}\,(a)}. 
\end{eqnarray*}
Furthermore, by \cite[Theorem 3.3(a)]{aupetitmoutontrace} it follows that $\mathrm{Tr}\, \left(x+y\right) = \mathrm{Tr}\,(x) + \mathrm{Tr}\,(y)$ for each $x, y \in \mathrm{Soc}\:A$. The next lemma shows that the trace is in fact a linear functional:

\begin{lemma}\label{1.1}
	Let $a$ be a finite-rank element of $A$ and let $\alpha \in \mathbb{C}-\left\{0 \right\}$. Then $m\left(\lambda, a\right) = m\left(\alpha\lambda, \alpha a\right)$ for each $\lambda \in \sigma' (a)$. Consequently, $\mathrm{Tr}\,\left(\alpha a\right) = \alpha \mathrm{Tr}\,(a)$ for each $\alpha \in \mathbb{C}$. 
\end{lemma}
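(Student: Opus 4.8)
The plan is to reduce the multiplicity statement to the definition of multiplicity via a direct comparison of the spectral counting data for $a$ and $\alpha a$, and then to derive the scaling of the trace from it.

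First I would observe that $\sigma(\alpha a) = \alpha\,\sigma(a)$, so that $\lambda \mapsto \alpha\lambda$ is a bijection between $\sigma'(a)$ and $\sigma'(\alpha a)$; fix $\lambda \in \sigma'(a)$ and let $B(\lambda,r)$ be an open disk isolating $\lambda$ from the rest of $\sigma(a)$, so that $B(\alpha\lambda, |\alpha| r)$ isolates $\alpha\lambda$ from the rest of $\sigma(\alpha a)$. The key point is that the rank, and more importantly the set $E(\cdot)$ and the small ball $U$ around $\mathbf 1$ appearing in the definition of multiplicity, transform simply under this scaling. Concretely, for any $x \in A$ we have $\sigma'(x(\alpha a)) = \sigma'((\alpha x)a) = \alpha\,\sigma'(xa)$ by homogeneity of the spectrum and Jacobson's Lemma, hence $\#\sigma'(x(\alpha a)) = \#\sigma'(xa)$; this gives $\mathrm{rank}\,(\alpha a) = \mathrm{rank}\,(a)$ and $E(\alpha a) = E(a)$. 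Then for $x \in E(a)\cap U$ the identity $\sigma(x(\alpha a)) \cap B(\alpha\lambda, |\alpha|r) = \alpha\bigl(\sigma(xa)\cap B(\lambda,r)\bigr)$ shows the two cardinalities agree, so the constant integer defining $m(\alpha\lambda, \alpha a)$ equals the one defining $m(\lambda, a)$. (One should check the ball $U$ centered at $\mathbf 1$ used for $\alpha a$ can be taken to be the same as the one for $a$, since multiplying $x$ by $\alpha$ does not move $x$ relative to $\mathbf 1$; the cleanest route is to note the definition only requires \emph{some} such ball, and the displayed cardinality identity holds for every $x$, so whichever $U$ works for $a$ also works for $\alpha a$.)

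Having established $m(\lambda,a) = m(\alpha\lambda,\alpha a)$ for all $\lambda \in \sigma'(a)$ and all $\alpha \neq 0$, the trace identity follows by a short computation:
\begin{eqnarray*}
\mathrm{Tr}\,(\alpha a) &=& \sum_{\mu \in \sigma'(\alpha a)} \mu\, m(\mu, \alpha a)
= \sum_{\lambda \in \sigma'(a)} (\alpha\lambda)\, m(\alpha\lambda, \alpha a) \\
&=& \alpha \sum_{\lambda \in \sigma'(a)} \lambda\, m(\lambda, a)
= \alpha\,\mathrm{Tr}\,(a).
\end{eqnarray*}
The case $\alpha = 0$ is trivial since $\mathrm{Tr}\,(0) = 0$, and combining this homogeneity with the already-quoted additivity of the trace yields that $\mathrm{Tr}$ is linear on $\mathrm{Soc}\,A$.

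I expect the only genuine subtlety to be the bookkeeping in the second sentence of the first paragraph: making sure that the disk radius, the set $E$, and the neighbourhood $U$ of $\mathbf 1$ used in the definition of multiplicity for $\alpha a$ can be matched with those for $a$. Everything there rests on the two elementary facts $\sigma(\alpha b) = \alpha\,\sigma(b)$ and $\sigma'(xb) = \sigma'(bx)$, so once these are invoked the matching is essentially automatic; the rest is routine.
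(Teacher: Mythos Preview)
Your proposal is correct and follows essentially the same approach as the paper: scale the isolating disk by $|\alpha|$, use $\sigma(\alpha b)=\alpha\sigma(b)$ to match the spectral counts, identify $E(\alpha a)$ with $E(a)$, and read off the equality of multiplicities before summing to obtain the trace identity. The only cosmetic difference is that the paper takes separate neighbourhoods $U_{1}$ and $U_{\alpha}$ of $\mathbf{1}$ for $a$ and $\alpha a$ and then intersects them (using density of $E(a)$ to find a common test point), whereas you argue directly that a single $U$ serves for both; your version is a legitimate shortcut since the counting identity $\#\bigl[\sigma(x\alpha a)\cap B(\alpha\lambda,|\alpha|r)\bigr]=\#\bigl[\sigma(xa)\cap B(\lambda,r)\bigr]$ holds for \emph{every} $x$, so any $U$ witnessing constancy for $a$ automatically witnesses it for $\alpha a$.
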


\begin{proof}
	Set $\lambda_{0}=0$ and denote by $\lambda_{1}, \ldots, \lambda_{n}$ the distinct nonzero spectral values of $a$. Choose $r > 0$ so that the open disks $B\left(\lambda_{0}, r\right), B\left(\lambda_{1}, r\right), \ldots, B\left(\lambda_{n}, r\right)$ are all disjoint and $\alpha \in \mathbb{C}-\left\{0 \right\}$. By the Spectral Mapping Theorem \cite[Theorem 3.3.3(v)]{aupetit1991primer} it follows that $\sigma' \left(\alpha a\right) = \left\{\alpha\lambda_{1}, \ldots, \alpha\lambda_{n} \right\}$. Notice that
	$$B\left(\alpha\lambda_{0}, \left|\alpha\right| r\right),  \ldots, B\left(\alpha\lambda_{n}, \left|\alpha\right| r\right)$$
	are also all disjoint, and that, for each $i \in \left\{0, 1, \ldots, n \right\}$, we have $\beta \in B\left(\lambda_{i}, r\right)$ if and only if $\alpha \beta \in B\left(\alpha\lambda_{i}, \left|\alpha\right| r\right)$. Let $i \in \left\{ 1, \ldots, n \right\}$ be arbitrary but fixed. For $j \in \left\{ 1, \alpha\right\}$, let $U_{j}$ be an open disk centered at $\mathbf{1}$ so that $x \in U_{j} \cap E(ja)$ implies that
	$$\#\left(\sigma (jxa) \cap \Delta_{0}^{j}\right) = m \left(j\lambda_{i}, ja\right),$$
	where $\Delta_{0}^{j}$ is the interior of $\partial B\left(j\lambda_{i}, jr\right)$. Since $\#\sigma' (xa)  = \#\sigma' (\alpha xa)$ by the Spectral Mapping Theorem, and since $\mathrm{rank}\,(a) = \mathrm{rank}\,\left(\alpha a\right)$, it follows that
	$E(a) \subseteq E\left(\alpha a\right)$. Moreover, since $U_{1} \cap U_{\alpha} \cap E(a) \neq \emptyset$ by the density of $E(a)$, there exists an $x_{0} \in U_{1} \cap U_{\alpha} \cap E(a)$. But then $x_{0} \in U_{1} \cap E(a)$ and $x_{0} \in U_{\alpha} \cap E\left(\alpha a\right)$, so by the Spectral Mapping Theorem and our choice of contours we obtain that
	\begin{eqnarray*}
		m\left(\lambda_{i}, a\right) &=& \#\left(\sigma \left(x_{0}a\right) \cap \Delta_{0}^{1}\right) \\
		&=& \#\left(\sigma \left(\alpha x_{0}a\right) \cap \Delta_{0}^{\alpha}\right) \\
		&=& m\left(\alpha \lambda_{i}, \alpha a\right).
	\end{eqnarray*}
	Since $i$ was arbitrary, this completes the proof.
\end{proof}

In general, it is known that $A = \mathrm{Soc}\:A$ if and only if $A$ is finite-dimensional. Also, $\mathrm{Soc}\:A$ is a proper two-sided ideal whenever $A$ is infinite-dimensional. However, the following observation can be made concerning the dimension of $\mathrm{Soc}\:A$:

\begin{theorem}\label{2.1}
	The following are equivalent:
	\begin{itemize}
		\item[\textnormal{(a)}]
		$\mathrm{Soc}\:A$ is finite-dimensional.
		\item[\textnormal{(b)}]
		$\mathrm{Soc}\:A$ is closed.
		\item[\textnormal{(c)}]
		There exists a real number $c > 0$ such that $\left|\mathrm{Tr}\,(a)\right| \leq c \cdot \rho (a)$ for all $a \in \mathrm{Soc}\:A$.
	\end{itemize}
\end{theorem}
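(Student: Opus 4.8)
The plan is to establish the cycle of implications (a) $\Rightarrow$ (c) $\Rightarrow$ (b) $\Rightarrow$ (a). The first implication is immediate: if $\mathrm{Soc}\:A$ is finite-dimensional, then by the standard structure theory it is a finite direct sum of full matrix algebras $M_{n_i}(\mathbb{C})$, on which the spectral trace restricts to the ordinary matrix trace and the spectral radius to the usual norm-type quantity; a crude estimate $|\mathrm{Tr}\,(a)| \le \sum_i n_i \cdot \rho(a)$ then gives (c) with $c = \sum_i n_i$. Alternatively, one avoids structure theory entirely: on a finite-dimensional space the two seminorms $a \mapsto |\mathrm{Tr}\,(a)|$ and $a \mapsto \rho(a)$ are both continuous, and $\rho$ vanishes only on quasinilpotents, which in a semisimple finite-dimensional algebra are $0$; so $\rho$ is a norm and (c) follows by equivalence of norms (or a compactness argument on the unit sphere of $\rho$).

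**The implication (c) $\Rightarrow$ (b).** This is where the spectral trace machinery from the introduction does the work. Suppose (c) holds and let $(a_k)$ be a sequence in $\mathrm{Soc}\:A$ converging in norm to some $a \in A$; I want $a \in \mathrm{Soc}\:A$. The idea is to bound the ranks of the $a_k$ uniformly, then invoke lower semicontinuity of rank together with the Scarcity Theorem. Concretely, for a finite-rank element $b$ consider the polynomial $x \mapsto \mathrm{Tr}\,(x b^{m})$-type quantities, or better: use that if $b$ has rank $N$, then for generic $x$ near $\mathbf{1}$ the element $xb$ is (after the Diagonalization Theorem applied to a nearby maximal-rank perturbation) expressible with $N$ nonzero eigenvalues, and one can arrange a choice of $x$ making $\mathrm{Tr}\,(xb)$ large relative to $\rho(xb)$ unless $N$ is controlled. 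The cleanest route: for a maximal-finite-rank element $a = \lambda_1 p_1 + \cdots + \lambda_n p_n$ (Diagonalization Theorem), replacing $a$ by $u a u^{-1}$ for suitable invertible $u$ does not change $\mathrm{Tr}$ or $\rho$, but multiplying by an idempotent supported on the $p_i$ and rescaling shows $\mathrm{rank}\,(a) = n \le c$ would follow if we could force all $\lambda_i$ to be, say, $1$; since $p_1 + \cdots + p_n$ is itself in $\mathrm{Soc}\:A$ with $\rho = 1$ and $\mathrm{Tr} = \mathrm{rank}$, condition (c) directly yields $\mathrm{rank}\,(p) \le c$, hence \emph{every projection in $\mathrm{Soc}\:A$ has rank at most $\lfloor c \rfloor$}. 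By lower semicontinuity of rank and the fact that every finite-rank element's Riesz projections are dominated by its rank, one upgrades this to: every element of $\mathrm{Soc}\:A$ has rank at most some fixed $M$. Then $\mathrm{Soc}\:A \subseteq \mathcal{F}_M$, and $\mathcal{F}_M$ is closed (it is an intersection over $x \in A$ of sets $\{a : \#\sigma'(xa) \le M\}$, each closed because $\#\sigma'$ is lower semicontinuous on finite-rank elements via the Scarcity Theorem — more carefully, $\mathcal{F}_M = \overline{\mathcal{F}_M}$ follows from \cite[Theorem 2.2, Corollary 2.9]{aupetitmoutontrace} type arguments). Hence $\mathrm{Soc}\:A = \overline{\mathrm{Soc}\:A}$ is closed.

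**The implication (b) $\Rightarrow$ (a).** Here the natural tool is the Baire category theorem. Suppose $\mathrm{Soc}\:A$ is closed; then it is a Banach algebra in its own right (without identity if $A$ is infinite-dimensional). Write $\mathrm{Soc}\:A = \bigcup_{m=0}^{\infty} \mathcal{F}_m$, a countable union of closed sets (closedness of each $\mathcal{F}_m$ being the same fact used above). By Baire, some $\mathcal{F}_m$ has nonempty interior in $\mathrm{Soc}\:A$; translating, $\mathcal{F}_m$ contains a ball around $0$ in $\mathrm{Soc}\:A$, and since $\mathcal{F}_m$ is closed under scalar multiplication this forces $\mathrm{Soc}\:A = \mathcal{F}_m$, i.e. there is a uniform bound $M$ on the rank of every element of $\mathrm{Soc}\:A$. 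Now one must deduce finite-dimensionality from a uniform rank bound. This is the step I expect to be the main obstacle: a priori there could be infinitely many orthogonal minimal projections even with a rank bound — but \emph{there cannot}, because if $q_1, q_2, \ldots, q_{M+1}$ were orthogonal minimal projections then $q_1 + \cdots + q_{M+1}$ has rank $M+1 > M$, a contradiction. So $\mathrm{Soc}\:A$ contains only finitely many orthogonal minimal projections, bounded by $M$; combined with the decomposition of the socle as a sum of minimal left ideals (each generated by a minimal projection) and the standard fact that a semisimple algebra with finitely many simple components each of finite dimension (each minimal left ideal $Aq$ has dimension at most $M$ times something controllable, since $q A q = \mathbb{C} q$ and rank bounds cut down $\dim Ap$), one concludes $\dim \mathrm{Soc}\:A < \infty$. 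I would carry out this last dimension count carefully using the orthogonal-projection bound plus the structure of $\mathrm{Soc}\:A$ as a direct sum of its minimal two-sided ideals, each a simple algebra with a bounded number of orthogonal minimal idempotents, hence finite-dimensional by Wedderburn.
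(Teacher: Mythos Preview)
Your cycle runs (a)$\Rightarrow$(c)$\Rightarrow$(b)$\Rightarrow$(a), whereas the paper proves (a)$\Rightarrow$(b)$\Rightarrow$(c)$\Rightarrow$(a); both routes pass through the same key intermediate step, a uniform bound on the rank of every socle element, so the difference is largely organisational. That said, several points in your sketch need repair. In your alternative argument for (a)$\Rightarrow$(c) you claim that in a semisimple finite-dimensional algebra quasinilpotents vanish and hence $\rho$ is a norm; this is false (any nonzero nilpotent in $M_n(\mathbb{C})$ has $\rho=0$), so only the Wedderburn route survives there. In (b)$\Rightarrow$(a), your ``translating'' step does not work as stated: $\mathcal{F}_m$ is not closed under addition, so an open set $U\subseteq\mathcal{F}_m$ does not translate to a neighbourhood of $0$ inside $\mathcal{F}_m$. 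You can rescue it via subadditivity of rank (if $x_0\in U$ and $x_0+y\in U$ then $\mathrm{rank}(y)\le 2m$, and homogeneity then gives $\mathrm{Soc}\,A=\mathcal{F}_{2m}$); the paper instead applies the Scarcity Theorem to the analytic family $\lambda\mapsto x_0-\lambda(x_0-y)$, which is cleaner and avoids the factor of two. Your ``upgrade'' in (c)$\Rightarrow$(b) from a bound on the rank of projections to a bound on all ranks is also muddled; the direct argument is: if $\mathrm{rank}(a)=n$, choose $x\in E(a)$, and then the Riesz projection $p$ for $xa$ associated with all of $\sigma'(xa)$ satisfies $\mathrm{rank}(p)=\sum_{\lambda\in\sigma'(xa)}m(\lambda,xa)=n$, so $n\le\lfloor c\rfloor$.

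The final dimension count you flag as the main obstacle is precisely where the paper invests its effort, and it does so without invoking Wedderburn for $\mathrm{Soc}\,A$ (which would be mildly circular here, since the paper only establishes that $\mathrm{Soc}\,A$ has an identity \emph{after} this theorem). Starting from a maximal-rank element $a=\sum_i\alpha_i q_i$ with orthogonal minimal projections $q_i$, the paper shows $\mathrm{Soc}\,A=(q_1+\cdots+q_n)A$ directly: given any $b$ outside this right ideal, one manufactures from $(\mathbf{1}-\sum q_i)b$ a further Riesz projection $q$ annihilated on the left by all the $q_i$, and then $v=\sum_i i\,q_i+(n{+}1)q$ has at least $n{+}1$ distinct nonzero spectral values, contradicting the rank bound. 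Von Neumann regularity of the socle then yields $\mathrm{Soc}\,A=\sum_{i,j}q_iAq_j$ with each $q_iAq_j$ at most one-dimensional. Your sketch (``bounded number of orthogonal minimal idempotents plus structure theory'') is aimed at the same conclusion but leaves exactly this construction unwritten.
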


\begin{proof}
	The implication (a) $\Rightarrow$ (b) is clear. So assume that $\mathrm{Soc}\:A$ is closed. Recall that $\mathrm{Soc}\:A = \bigcup_{m=0}^{\infty}\mathcal{F}_{m}$. Moreover, from the lower semicontinuity of the rank on $\mathrm{Soc}\:A$ and the fact that $\mathrm{Soc}\;A$ is closed, it now follows that each $\mathcal{F}_{m}$ is closed. Thus, from Baire's Category Theorem, it follows that there is a smallest integer $n$ for which there is an open set $U \neq \emptyset$ in $\mathrm{Soc}\:A$ such that $U \subseteq \mathcal{F}_{n}$. Let $x \in U$ be arbitrary but fixed. Let $y \in \mathrm{Soc}\:A$ be arbitrary and consider the analytic function $f: \mathbb{C} \rightarrow A$ defined by $f\left(\lambda\right) = x - \lambda (x-y)$ for each $\lambda \in \mathbb{C}$. Since $U$ is open in $\mathrm{Soc}\:A$, and since $f(\lambda) \in \mathrm{Soc}\:A$ for each $\lambda \in \mathbb{C}$, it readily follows that there exists an $\epsilon > 0$ such that $\lambda \in B(0, \epsilon)$ implies that $f(\lambda) = x-\lambda (x-y) \in U$. Consequently, $\mathrm{rank}\,\left( f(\lambda)\right) \leq n$ for all $\lambda$ in a set with nonzero capacity. Thus, by the Scarcity Theorem for Rank, it follows that $\mathrm{rank}\,\left( f(\lambda)\right) \leq n$ for all $\lambda \in \mathbb{C}$. So, in particular, 
	$$\mathrm{rank}\,\left( f(1)\right) = \mathrm{rank}\,\left(x-(x-y)\right) =  \mathrm{rank}\,\left(y\right)  \leq n.$$
	Since $y \in \mathrm{Soc}\:A$ was arbitrary, it follows that $\mathrm{rank}\,\left(y\right)  \leq n$ for all $y \in \mathrm{Soc}\:A$. Consequently, it follows that
	$$\left|\mathrm{Tr}\,(a)\right| \leq \mathrm{rank}\,(a) \cdot \rho (a) \leq n \cdot \rho (a)$$
	for all $a \in \mathrm{Soc}\:A$. This shows that (b) $\Rightarrow$ (c). Suppose now that there exists a real number $c > 0$ such that $\left|\mathrm{Tr}\,(a)\right| \leq c \cdot \rho (a)$ for all $a \in \mathrm{Soc}\:A$. Let $k$ be any integer such that $k \geq c$. We claim that $\#\sigma'(y) \leq k$ for all $y \in \mathrm{Soc}\:A$: Suppose this is false. Then there exists an $x \in \mathrm{Soc}\:A$ such that $\#\sigma'(x) \geq k+1$. Let $\lambda_{1}, \ldots, \lambda_{k+1}$ be $k+1$ distinct nonzero spectral values of $x$ and let $p_{j} = p\left(\lambda_{j}, x\right)$ for each $j \in \left\{1, \ldots, k+1 \right\}$. Then $p = p_{1} + \cdots + p_{k+1}$ is a projection. Moreover, by our remarks in the introduction it follows that
	$$\mathrm{Tr}\,(p) = m\left(1, p\right) = \mathrm{rank}\,\left(p\right) = m\left(\lambda_{1}, x\right) + \cdots + m\left(\lambda_{k+1}, x\right) \geq k+1.$$
	But then
	$$\left|\mathrm{Tr}\,(p)\right| \geq k+1 > c = c \cdot \rho (p).$$
	This contradiction now proves our claim. Consequently, there exists a least integer $n$ such that $\mathrm{rank}\,\left(y\right)  \leq n$ for all $y \in \mathrm{Soc}\:A$. If $n = 0$, then $\mathrm{Soc}\:A = \left\{0 \right\}$ and we are done. So assume that $n \geq 1$. Since $n$ was the smallest integer with this property, it must be the case that $\mathrm{rank}\,(a) = n$ for some $a \in \mathrm{Soc}\:A$. Moreover, without loss of generality, we may assume that $a$ is a maximal finite-rank element. By the Diagonalization Theorem, there are $n$ orthogonal rank one projections $q_{1}, \ldots, q_{n}$ such that $a = \alpha_{1}q_{1} + \cdots +\alpha_{n}q_{n}$, where $\alpha_{1}, \ldots , \alpha_{n} \in \mathbb{C}-\left\{0 \right\}$. We claim that $\mathrm{Soc}\:A = \left(q_{1}+ \cdots +q_{n}\right)A$: Suppose not. Then there exists a $b \in \mathrm{Soc}\:A$ such that $b \notin \left(q_{1}+ \cdots +q_{n}\right)A$. Necessarily, $b \neq 0$. Let $u = \left(1-\left(q_{1} + \ldots +q_{n}\right)\right)b$. Then $u \in \mathrm{Soc}\:A$. Moreover, $u \neq 0$, for if $u=0$, then $b = q_{1}b + \cdots +q_{n}b$, which contradicts our choice of $b$. Finally, note that $q_{i}u=0$ for each $i \in \left\{1, \ldots , n \right\}$. Since $u \neq 0$, it follows that $\mathrm{rank}\,(u) = r$ for some integer $r$ with $1 \leq r \leq n$. Let $x \in A$ so that $\#\sigma' (ux) =r$ and let $\beta_{1}, \ldots , \beta_{r}$ be the distinct nonzero spectral values of $ux$. Now let $q$ be the Riesz projection associated with $ux$ and $\beta_{1}, \ldots , \beta_{r}$. Then $q \in \mathrm{Soc}\:A$. Moreover, by formula (\ref{eq1}) we have that $q_{i}q=0$ for each $i \in \left\{1, \ldots, n \right\}$. Consider the element
	$$v= 1q_{1} +2q_{2}+ \cdots +nq_{n} +(n+1)q,$$
	and note that $v \in \mathrm{Soc}\:A$. Since $q_{i}\left(i\mathbf{1}-v\right) = 0$ for each $i \in \left\{1, \ldots, n \right\}$, and since $\left((n+1)\mathbf{1}-v\right)q=0$, it follows that $\left\{1, \ldots, n+1 \right\} \subseteq \sigma (v)$. But this contradicts the fact that $\mathrm{rank}\,(v) \leq n$. Hence, $\mathrm{Soc}\:A = \left(q_{1}+ \cdots +q_{n}\right)A$ as claimed. By using a symmetric argument it can be shown that $\mathrm{Soc}\:A = A\left(q_{1}+ \cdots +q_{n}\right)$ as well. Hence, since every element of the socle is von Neumann regular (i.e. for each $a \in \mathrm{Soc}\:A$ there exists an $x \in \mathrm{Soc}\:A \subseteq A$ such that $a=axa$ \cite[Corollary 2.10]{aupetitmoutontrace}), we may infer that
	$$\mathrm{Soc}\:A = \sum_{i=1}^{n}\sum_{j=1}^{n}q_{i}Aq_{j}.$$
	Thus, since $\mathrm{dim}\,\left(q_{i}Aq_{j}\right) \leq 1$ for each $i, j \in \left\{1, \ldots, n \right\}$ (see \cite[Lemma 4.2]{puhltrace}), we have shown that (c) $\Rightarrow$ (a). This establishes the result.
\end{proof}

From the last part of the argument above it is actually possible to deduce a bit more. If $\mathrm{Soc}\:A$ is finite-dimensional, then it is possible to find a projection $p$, which is the finite sum of orthogonal rank one projections, such that $\mathrm{Soc}\:A = Ap = pA$. So, since every element of the socle is von Neumann regular, it follows that $\mathrm{Soc}\:A = pAp$. But $pAp$ is a closed semisimple subalgebra of $A$ with identity element $p$ (see \cite[Chapter 3, Exercise 6]{aupetit1991primer}). Hence, $\mathrm{Soc}\:A$ is a finite-dimensional semisimple Banach algebra with an identity element. Consequently, by the Wedderburn-Artin Theorem \cite[Theorem 2.1.2]{aupetit1991primer}, if $\mathrm{Soc}\:A$ is finite-dimensional, then it is isomorphic as an algebra to $M_{n_{1}} \left(\mathbb{C}\right) \oplus \cdots \oplus M_{n_{k}} \left(\mathbb{C}\right)$.\\

Let $p$ be a finite-rank projection of $A$. The subalgebra $pAp$ is very useful in the theory of rank, trace and determinant, primarily because of the following reasons:
\begin{equation}
\sigma_{pAp}' \left(pxp\right) = \sigma_{A}' \left(pxp\right)
\label{eq3}
\end{equation}
and
\begin{equation}
\mathrm{rank}_{pAp}(pxp) = \mathrm{rank}_{A}(pxp)
\label{eq4}
\end{equation}
for each $x \in A$. The proof of (\ref{eq3}) is not hard and (\ref{eq4}) is a consequence of (\ref{eq3}) and Jacobson's Lemma.\\

In order to prove our main results, some further preparation is needed:

\begin{lemma}\label{2.2}
	Let $f$ be a linear functional on $\mathrm{Soc}\:A$, and let $c > 0$ be a real number. Suppose that $\left|f(a)\right| \leq c \cdot \mathrm{rank}\,(a) \cdot \rho (a)$ for all $a \in \mathrm{Soc}\:A$. Then $f$ is continuous on $\mathcal{F}_{k}$ for each nonnegative integer $k$.
\end{lemma}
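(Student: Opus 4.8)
The plan is to show that the restriction of $f$ to $\mathcal{F}_k$ is actually Lipschitz, with a constant depending only on $k$ and $c$; continuity on $\mathcal{F}_k$ is then immediate. The key observation is that, although $\mathcal{F}_k$ is not a linear subspace of $\mathrm{Soc}\:A$, the difference of two elements of $\mathcal{F}_k$ always lies in $\mathcal{F}_{2k}$ by subadditivity of the rank, so the hypothesis $|f(a)| \le c\cdot \mathrm{rank}\,(a)\cdot\rho(a)$ can be fed a bounded rank on such differences and thereby turned into a genuine norm estimate for $f$.

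Concretely, I would fix a nonnegative integer $k$ and take arbitrary $a,b\in\mathcal{F}_k$. Since $\mathrm{Soc}\:A$ is a linear subspace, $a-b\in\mathrm{Soc}\:A$, and by the subadditivity of the rank (the Aupetit--Mouton result quoted in the Introduction) we have $\mathrm{rank}\,(a-b)\le\mathrm{rank}\,(a)+\mathrm{rank}\,(b)\le 2k$. Applying the hypothesis to $a-b$, using linearity of $f$, and invoking the elementary bound $\rho(x)\le\|x\|$, I obtain
\[
|f(a)-f(b)| = |f(a-b)| \le c\cdot\mathrm{rank}\,(a-b)\cdot\rho(a-b) \le 2ck\cdot\|a-b\|.
\]
Thus $f|_{\mathcal{F}_k}$ is Lipschitz with constant $2ck$, hence (uniformly) continuous on $\mathcal{F}_k$. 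The case $k=0$ is degenerate, since in a semisimple algebra $\mathcal{F}_0=\{0\}$, but it is in any event covered by the displayed estimate, whose right-hand side then vanishes. If one prefers a sequential formulation: given $a_n\to a$ in $\mathcal{F}_k$, write $f(a_n)-f(a)=f(a_n-a)$ and bound $|f(a_n-a)|\le c\cdot\mathrm{rank}\,(a_n-a)\cdot\rho(a_n-a)$; since $\mathrm{rank}\,(a_n-a)\le 2k$ stays bounded while $\rho(a_n-a)\le\|a_n-a\|\to 0$, the right-hand side tends to $0$.

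I do not expect a genuine obstacle here: the whole argument rests only on linearity of $f$, subadditivity of the rank on $\mathrm{Soc}\:A$, and the domination $\rho(\cdot)\le\|\cdot\|$, all of which are standard and already recorded above. The one point worth stating carefully is precisely the passage $\mathcal{F}_k-\mathcal{F}_k\subseteq\mathcal{F}_{2k}$, which is what prevents the naive ``pointwise bound plus linearity'' reasoning from failing on the non-linear set $\mathcal{F}_k$.
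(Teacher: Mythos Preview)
Your argument is correct and is essentially identical to the paper's own proof: both use subadditivity of the rank to get $\mathrm{rank}\,(a-b)\le 2k$ for $a,b\in\mathcal{F}_k$, then apply the hypothesis and $\rho(\cdot)\le\|\cdot\|$ to obtain the Lipschitz bound $|f(a)-f(b)|\le 2ck\,\|a-b\|$. The paper phrases this sequentially, exactly as in your final paragraph.
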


\begin{proof}
	Let $x \in \mathcal{F}_{k}$ be arbitrary, and suppose that $\left(x_{n}\right) \subseteq \mathcal{F}_{k} - \left\{x \right\}$ converges to $x$. We must show that $f\left(x_{n}\right) \rightarrow f(x)$ as $n \rightarrow \infty$: By the subadditivity of the rank it follows that
	$$\mathrm{rank}\,\left(x_{n} - x\right) \leq \mathrm{rank}\,\left(x_{n}\right) + \mathrm{rank}\,(x) \leq 2k$$
	for each integer $n \geq 1$. Thus, by linearity and the hypothesis on $f$ it follows that
	\begin{eqnarray*}
		\left|f\left(x_{n}\right) - f(x)\right| = \left|f\left(x_{n}-x\right)\right| & \leq & c \cdot \mathrm{rank}\,\left(x_{n}-x\right) \cdot \rho \left(x_{n}-x\right) \\
		& \leq & c \cdot 2k \cdot \left\|x_{n}-x\right\|.
	\end{eqnarray*}
	Since $x_{n} \rightarrow x$ as $n \rightarrow \infty$, we have the desired result. Hence, since $x \in \mathcal{F}_{k}$ was arbitrary, the lemma is proved. 
\end{proof}

\begin{theorem}\label{2.3}
	Let $f$ be a linear functional on $\mathrm{Soc}\:A$, and let $c > 0$ be a real number. Suppose that $\left|f(a)\right| \leq c \cdot \mathrm{rank}\,(a) \cdot \rho (a)$ for all $a \in \mathrm{Soc}\:A$. Then $f(ab) = f(ba)$ for all $a \in \mathrm{Soc}\:A$ and $b \in A$.
\end{theorem}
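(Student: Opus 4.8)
The plan is to fix $a \in \mathrm{Soc}\,A$ and $b \in A$, write $k = \mathrm{rank}\,(a)$, and exploit the entire $A$-valued function $u(\lambda) = e^{\lambda b}\,a\,e^{-\lambda b}$. Because $e^{\lambda b} \in G(A)$ for every $\lambda$, conjugation by it preserves both the rank and the spectrum, so $\mathrm{rank}\,(u(\lambda)) = k$ and $\rho(u(\lambda)) = \rho(a)$ for all $\lambda \in \mathbb{C}$; in particular $u(\lambda) \in \mathcal{F}_k \subseteq \mathrm{Soc}\,A$, so $g(\lambda) := f(u(\lambda))$ is well defined, and the standing hypothesis on $f$ yields the \emph{uniform} bound $|g(\lambda)| \le c\,k\,\rho(a)$ on all of $\mathbb{C}$.

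The main work will be to show that $g$ is holomorphic on $\mathbb{C}$; once this is done, Liouville's theorem forces $g$ to be constant, so $g'(0) = 0$, and since $u(0) = a$ and $u'(0) = ba - ab$ this reads $f(ba - ab) = 0$, i.e. $f(ab) = f(ba)$, which is the claim. To get holomorphy I would fix $\lambda_0$ and study the difference quotients $w(\lambda) := (\lambda - \lambda_0)^{-1}\big(u(\lambda) - u(\lambda_0)\big)$ for $\lambda \neq \lambda_0$. Since $u$ is analytic as an $A$-valued map, $w(\lambda) \to u'(\lambda_0) = b\,u(\lambda_0) - u(\lambda_0)\,b$ in norm as $\lambda \to \lambda_0$. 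The key point is that all the relevant elements lie in one \emph{fixed} $\mathcal{F}_{2k}$: indeed $\mathrm{rank}\,(w(\lambda)) \le \mathrm{rank}\,(u(\lambda)) + \mathrm{rank}\,(u(\lambda_0)) = 2k$ by subadditivity of the rank, and likewise $\mathrm{rank}\,\big(b\,u(\lambda_0) - u(\lambda_0)\,b\big) \le \mathrm{rank}\,(b\,u(\lambda_0)) + \mathrm{rank}\,(u(\lambda_0)\,b) \le 2k$. Lemma \ref{2.2} then says $f$ is continuous on $\mathcal{F}_{2k}$, so by linearity $(\lambda - \lambda_0)^{-1}\big(g(\lambda) - g(\lambda_0)\big) = f(w(\lambda)) \to f\big(b\,u(\lambda_0) - u(\lambda_0)\,b\big)$, which shows $g$ is complex differentiable at $\lambda_0$. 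As $\lambda_0$ is arbitrary, $g$ is entire.

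The step I expect to be the genuine obstacle is precisely this holomorphy argument: $f$ is only assumed to be a linear functional on $\mathrm{Soc}\,A$, with no global continuity, so one cannot simply compose $f$ with the analytic curve $u$ and read off a power series. The whole device is to remain inside a \emph{single} set $\mathcal{F}_m$ (here $m = 2k$), on which Lemma \ref{2.2} does deliver continuity of $f$; this is why it matters that the derivative $u'(\lambda_0)$ has rank at most $2k$, and not merely that it lies in $\mathrm{Soc}\,A$. With holomorphy and boundedness of $g$ in hand, Liouville and the evaluation of $g'(0)$ finish the proof, and the conclusion holds for all $a \in \mathrm{Soc}\,A$ and $b \in A$ since these were arbitrary.
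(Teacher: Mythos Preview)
Your proposal is correct and follows essentially the same approach as the paper's proof: both consider the conjugation curve $\lambda \mapsto e^{\lambda b}ae^{-\lambda b}$, use subadditivity of rank to confine the difference quotients and the derivative to $\mathcal{F}_{2k}$, invoke Lemma~\ref{2.2} for continuity of $f$ there to establish holomorphy, and then apply Liouville's theorem to the bounded entire function $\lambda\mapsto f(e^{\lambda b}ae^{-\lambda b})$ to conclude $f(ba-ab)=0$.
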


\begin{proof}
	Let $a \in \mathrm{Soc}\:A$ and $b \in A$ be arbitrary. Consider the analytic function $g: \mathbb{C} \rightarrow \mathrm{Soc}\:A$ given by $g(\lambda) = e^{\lambda b}ae^{-\lambda b}$. We claim that $\lambda \mapsto f\left(g(\lambda)\right)$ is an entire function: To prove our claim it will suffice to show that
	$$\lim_{\lambda \rightarrow \lambda_{0}} \frac{f\left(g(\lambda)\right) - f\left(g\left(\lambda_{0}\right)\right) }{\lambda - \lambda_{0}}$$
	exists for all $\lambda_{0} \in \mathbb{C}$. Let $\lambda_{0} \in \mathbb{C}$ be arbitrary, and let $\left(\alpha_{n}\right) \subseteq \mathbb{C}- \left\{\lambda_{0} \right\}$ be any sequence which converges to $\lambda_{0}$. Observe that
	$$\frac{f\left(g\left(\alpha_{n}\right)\right) - f\left(g\left(\lambda_{0}\right)\right) }{\alpha_{n} - \lambda_{0}}  =  f\left(\frac{g\left(\alpha_{n}\right) - g\left(\lambda_{0}\right)}{\alpha_{n} - \lambda_{0}} \right) $$
	for each integer $n \geq 1$, and that
	$$\mathrm{rank}\,\left(\frac{g\left(\alpha_{n}\right) - g\left(\lambda_{0}\right)}{\alpha_{n} - \lambda_{0}}\right) \leq 2 \cdot \mathrm{rank}\,(a)$$
	by the subadditivity and the properties of the rank mentioned earlier in the paper. Moreover, it can be shown that 
	$$g'\left(\lambda_{0}\right) = e^{\lambda_{0}b}bae^{-\lambda_{0}b} - e^{\lambda_{0}b}abe^{-\lambda_{0}b},$$
	so $g'\left(\lambda_{0}\right) \in \mathrm{Soc}\:A$ and $\mathrm{rank}\,\left(g'\left(\lambda_{0}\right)\right) \leq 2 \cdot \mathrm{rank}\,(a)$ as before. Hence, by Lemma \ref{2.2} it follows that
	$$\lim_{n \rightarrow \infty} \frac{f\left(g\left(\alpha_{n}\right)\right) - f\left(g\left(\lambda_{0}\right)\right) }{\alpha_{n} - \lambda_{0}} = f\left(g'\left(\lambda_{0}\right)\right).$$
	Thus, since $\lambda_{0} \in \mathbb{C}$ and $\left(\alpha_{n}\right)$ were arbitrary, this proves our claim. However, by hypothesis,
	\begin{eqnarray*}
		\left|f\left(g(\lambda)\right)\right| & \leq & c \cdot \mathrm{rank}\,\left(g(\lambda)\right) \cdot \rho \left(g(\lambda)\right) \\
		& = & c \cdot \mathrm{rank}\,(a) \cdot \rho (a)
	\end{eqnarray*}
	for each $\lambda \in \mathbb{C}$, where the equality sign follows from the properties of the rank and Jacobson's Lemma. So, since $\lambda \mapsto f\left(g(\lambda)\right)$ is entire and bounded, we may conclude by Liouville's Theorem that it must be constant. Hence, $0 = f\left(g'(0)\right) = f(ba-ab)$, so $f(ab) = f(ba)$ as desired.
\end{proof}

Since the trace is a linear functional on $\mathrm{Soc}\:A$ which satisfies
$$\left|\mathrm{Tr}\,(a)\right| \leq \mathrm{rank}\,(a) \cdot \rho (a) \;\,\mathrm{for\;all}\;\,a \in \mathrm{Soc}\:A,$$ 
Theorem \ref{2.3} readily gives the following:

\begin{corollary}\label{2.4}
	Let $a \in \mathrm{Soc}\:A$. Then $\mathrm{Tr}\,(ab) = \mathrm{Tr}\,(ba)$ for all $b \in A$.
\end{corollary}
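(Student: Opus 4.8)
The plan is to obtain this as an immediate consequence of Theorem \ref{2.3}, so the only work is to check that the trace satisfies the hypotheses of that theorem with the constant $c = 1$. First, one needs to know that $\mathrm{Tr}$ really is a linear functional on $\mathrm{Soc}\:A$: additivity is quoted from \cite[Theorem 3.3(a)]{aupetitmoutontrace} and scalar homogeneity is exactly Lemma \ref{1.1}, so $\mathrm{Tr} \colon \mathrm{Soc}\:A \to \mathbb{C}$ is linear. Second, one needs the growth bound $\left|\mathrm{Tr}\,(a)\right| \leq \mathrm{rank}\,(a) \cdot \rho (a)$ for all $a \in \mathrm{Soc}\:A$; but this is precisely the chain of inequalities displayed at the start of Section 2, which shows $\left|\mathrm{Tr}\,(a)\right| \leq \rho(a)\cdot \mathrm{rank}\,(a)$. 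Hence the hypothesis of Theorem \ref{2.3} is met with $f = \mathrm{Tr}$ and $c = 1$.

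With that in hand I would simply invoke Theorem \ref{2.3}: for $a \in \mathrm{Soc}\:A$ and $b \in A$ it gives $\mathrm{Tr}\,(ab) = \mathrm{Tr}\,(ba)$. The one small point worth flagging — though it is not really an obstacle — is that the expressions $\mathrm{Tr}\,(ab)$ and $\mathrm{Tr}\,(ba)$ must make sense; this is fine because $\mathrm{Soc}\:A$ is a two-sided ideal of $A$, so $ab, ba \in \mathrm{Soc}\:A$ whenever $a \in \mathrm{Soc}\:A$ and $b \in A$. There is no genuinely hard step here: the corollary is a formal specialization of Theorem \ref{2.3}, and the author's phrasing ``Theorem \ref{2.3} readily gives the following'' signals exactly this. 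If anything, the only thing to be careful about is making sure the reader recalls that the required bound on $\mathrm{Tr}$ was already derived (not merely asserted) earlier in the section, which is why I would explicitly point back to that computation rather than re-deriving it.
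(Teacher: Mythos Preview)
Your proposal is correct and matches the paper's approach exactly: the paper simply observes, in the sentence immediately preceding the corollary, that $\mathrm{Tr}$ is a linear functional on $\mathrm{Soc}\:A$ satisfying $\left|\mathrm{Tr}\,(a)\right| \leq \mathrm{rank}\,(a)\cdot\rho(a)$ and then invokes Theorem~\ref{2.3}. Your additional remarks (pointing back to Lemma~\ref{1.1} for homogeneity and noting that $ab, ba \in \mathrm{Soc}\:A$) merely make explicit what the paper leaves implicit.
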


\section{Trace Characterizations and Socle Identifications}

The next lemma can be obtained as a direct consequence of a deep result by K. Shoda (see \cite{shodathm}). However, we show that this result is not necessary for the development of our theory. 

\begin{lemma}\label{2.5}
	Let $A = M_{n} \left(\mathbb{C}\right)$. For $a, b \in A$ denote by $\left[a, b\right]$ the commutator $\left[a, b\right] =ab - ba$. Then $\mathrm{span}\left\{\left[a, b\right] : a, b \in A \right\} = \mathrm{Ker}\:\mathrm{Tr}$.
\end{lemma}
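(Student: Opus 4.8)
The plan is to prove the two inclusions separately, the forward one being immediate and the reverse one a short dimension count. First, for any $a, b \in A$ we have $\mathrm{Tr}\left(\left[a,b\right]\right) = \mathrm{Tr}\,(ab) - \mathrm{Tr}\,(ba) = 0$, by Corollary \ref{2.4} (or, more simply, by the cyclicity of the classical matrix trace, with which $\mathrm{Tr}$ coincides here). Since $\mathrm{Ker}\,\mathrm{Tr}$ is a linear subspace, it follows that $\mathrm{span}\left\{\left[a,b\right] : a, b \in A\right\} \subseteq \mathrm{Ker}\,\mathrm{Tr}$.

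For the reverse inclusion, denote by $E_{ij}$, $1 \le i, j \le n$, the standard matrix units. I would observe that the span of commutators already contains an obvious spanning set of $\mathrm{Ker}\,\mathrm{Tr}$. Indeed, for $i \neq j$ one has $\left[E_{ij}, E_{jj}\right] = E_{ij}$, so every off-diagonal matrix unit lies in $\mathrm{span}\left\{\left[a,b\right] : a, b \in A\right\}$; and for $i \neq j$ one has $\left[E_{ij}, E_{ji}\right] = E_{ii} - E_{jj}$, so every difference of diagonal units lies there too. An arbitrary $x \in \mathrm{Ker}\,\mathrm{Tr}$ can be written as $x = \sum_{i \neq j} x_{ij} E_{ij} + \sum_{i} x_{ii} E_{ii}$ with $\sum_{i} x_{ii} = 0$, whence $\sum_{i} x_{ii} E_{ii} = \sum_{i=2}^{n} x_{ii}\left(E_{ii} - E_{11}\right)$; both summands of $x$ are thus in the span of commutators. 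This proves $\mathrm{Ker}\,\mathrm{Tr} \subseteq \mathrm{span}\left\{\left[a,b\right] : a, b \in A\right\}$, and together with the first inclusion gives the claimed equality. (Alternatively, one notes that the $n^{2} - n$ off-diagonal units together with the $n-1$ elements $E_{11} - E_{jj}$, $j = 2, \ldots, n$, are linearly independent, so $\mathrm{span}\left\{\left[a,b\right] : a, b \in A\right\}$ has dimension at least $n^{2} - 1 = \dim \mathrm{Ker}\,\mathrm{Tr}$, forcing equality.)

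There is no real obstacle here; the argument is entirely elementary, which is precisely why the deep theorem of Shoda can be dispensed with. The only point meriting a remark is the trivial case $n = 1$, where $\mathrm{Ker}\,\mathrm{Tr} = \left\{0\right\}$ and every commutator vanishes, so the identity holds vacuously.
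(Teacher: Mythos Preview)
Your proof is correct and follows essentially the same approach as the paper: both establish the forward inclusion via the trace vanishing on commutators, and the reverse inclusion by exhibiting the off-diagonal matrix units and differences of diagonal units explicitly as commutators of matrix units. The only cosmetic differences are which commutator you pick to realize $E_{ij}$ (the paper uses $[e_{i,i},e_{i,j}]$) and that the paper telescopes with consecutive differences $e_{k,k}-e_{k+1,k+1}$ whereas you use $E_{ii}-E_{11}$; neither choice affects the argument.
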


\begin{proof}
	By Corollary \ref{2.4} and the linearity of the trace it follows that 
	$$\mathrm{span}\left\{\left[a, b\right] : a, b \in A \right\} \subseteq \mathrm{Ker}\:\mathrm{Tr}.$$
	We prove the reverse containment. Notice that the traceless matrices are precisely those matrices which have arbitrary entries off the main diagonal and whose entries on the main diagonal sum to zero. Thus, if the $\left(i, i\right)$-entry is denoted by $\lambda_{i, i}$ for each $i \in \left\{1, \ldots, n \right\}$, then for the aforementioned traceless matrices we have $\lambda_{n, n} = - \left(\lambda_{1, 1} + \cdots + \lambda_{n-1, n-1}\right)$. We show that we can find a spanning set for the traceless matrices using only commutators. Let $e_{i, j}$ denote the matrix which has zeros at each entry except at the $\left(i, j\right)$-entry where it has a $1$. Consider any $e_{i, j}$ where $i \neq j$ (i.e. off the main diagonal). Then
	$$\left[e_{i,i}, e_{i, j}\right] = e_{i, i}e_{i, j} - e_{i, j}e_{i, i} = e_{i, j}.$$
	So we can generate all $0$ diagonal matrices with commutators. To deal with the diagonal consider $\left[e_{1, 2}, e_{2, 1}\right] = e_{1, 1} - e_{2, 2}\;\,;\;\,\left[e_{2, 3}, e_{3, 2}\right] = e_{2, 2} - e_{3, 3}\;\,;\;\,\left[e_{3, 4}, e_{4, 3}\right] = e_{3, 3} - e_{4, 4}\;\,;\;\,\cdots\;\,;\;\,\left[e_{n-1, n}, e_{n, n-1}\right] = e_{n-1, n-1} - e_{n, n}$. Thus, to get 
	$$\lambda_{1, 1}, \lambda_{2, 2}, \ldots, \lambda_{n-1, n-1}, -\left(\lambda_{1, 1}, \ldots, \lambda_{n-1, n-1}\right)$$
	as entries on the main diagonal just build the linear combination
	\begin{eqnarray*}
		&& \lambda_{1, 1}\left[e_{1, 2}, e_{2, 1}\right] + \left(\lambda_{1, 1} + \lambda_{2, 2}\right)\left[e_{2, 3}, e_{3, 2}\right] + \left(\lambda_{1, 1} + \lambda_{2, 2} + \lambda_{3, 3}\right)\left[e_{3, 4}, e_{4, 3}\right] \\
		& & + \cdots + \left(\lambda_{1, 1}, \ldots, \lambda_{n-1, n-1}\right)\left[e_{n-1, n}, e_{n, n-1}\right]. 
	\end{eqnarray*}
	This completes the proof.  
\end{proof}

\begin{lemma}\label{2.6}
	Let $x \in pAp = B$, where $p = p_{1} + \cdots + p_{n}$ with $p_{1}, \ldots, p_{n}$ orthogonal rank one projections of $A$. Then $\mathrm{Tr}_{A} (x) = \mathrm{Tr}_{B} (x)$.
\end{lemma}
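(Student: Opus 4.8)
The statement asserts that the abstract (spectral) trace computed inside the corner algebra $B = pAp$ agrees with the trace computed in the ambient algebra $A$, where $p$ is a sum of $n$ orthogonal rank one projections. The key point is that both traces are defined purely in terms of nonzero spectra and multiplicities, and corners preserve exactly those data via equations (\ref{eq3}) and (\ref{eq4}). So the plan is to reduce the equality $\mathrm{Tr}_A(x) = \mathrm{Tr}_B(x)$ to the corresponding equality of multiplicities, spectral value by spectral value.

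First I would recall that $B = pAp$ is a closed semisimple Banach algebra with identity $p$ (as noted in the excerpt, via \cite[Chapter 3, Exercise 6]{aupetit1991primer}), that $x = pxp$ has finite rank in $B$, and that by (\ref{eq3}) and (\ref{eq4}) we have $\sigma_B'(x) = \sigma_A'(x)$ and $\mathrm{rank}_B(x) = \mathrm{rank}_A(x)$. Since both traces only involve the \emph{nonzero} spectral values, it suffices to prove that $m_A(\lambda, x) = m_B(\lambda, x)$ for every $\lambda \in \sigma_A'(x) = \sigma_B'(x)$. The multiplicity of $x$ at $\lambda$ is, by definition, the common value of $\#\big[\sigma(yx) \cap B(\lambda, r)\big]$ as $y$ ranges over a suitable small ball around the identity intersected with $E(x)$; the issue is that in $B$ one perturbs by elements $y \in pAp$ and multiplies as $pyp \cdot x$, while in $A$ one perturbs by arbitrary $y \in A$ and multiplies as $yx$.

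The bridge is Jacobson's Lemma together with the identity $x = pxp$: for any $y \in A$, $\sigma_A'(yx) = \sigma_A'(ypxp) = \sigma_A'(pxpy) = \sigma_A'(xp \cdot py) = \sigma_A'(py \cdot xp) = \sigma_A'((pyp)x)$, using that $xp = x = px$. Thus $\sigma_A'(yx) = \sigma_A'((pyp)x)$ for all $y \in A$, and by (\ref{eq3}) the latter equals $\sigma_B'((pyp)x)$. This shows that as $y$ runs through $A$, the sets $\sigma'(yx)$ realized are exactly those realized by elements of $B$ acting on $x$ inside $B$; in particular $E_A(x)$ and $E_B(x)$ are compatible, and the local counting functions $y \mapsto \#[\sigma(yx) \cap B(\lambda,r)]$ in $A$ and $pyp \mapsto \#[\sigma_B((pyp)x) \cap B(\lambda,r)]$ in $B$ take the same constant value on the relevant neighbourhoods of the respective identities. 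Concretely: given $\lambda \in \sigma'(x)$, pick $r$ isolating $\lambda$; choose $y_0 \in E_B(x)$ close to $p$ computing $m_B(\lambda, x)$; then $y_0 \in pAp$ viewed in $A$ need not be close to $\mathbf 1_A$, so instead take $y_0 + (\mathbf 1 - p) \in A$, which is close to $\mathbf 1_A$, lies in $E_A(x)$ since $(y_0 + \mathbf 1 - p)x = y_0 x$, and satisfies $\sigma_A((y_0 + \mathbf 1 - p)x) \cap B(\lambda, r) = \sigma_A(y_0 x) \cap B(\lambda,r) = \sigma_B(y_0 x) \cap B(\lambda, r)$ by (\ref{eq3}). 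Hence $m_A(\lambda, x) = m_B(\lambda, x)$.

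The main obstacle is exactly this mismatch between "near $\mathbf 1_A$" and "near $p = \mathbf 1_B$": an element close to the identity of $B$ sits far from the identity of $A$, so one cannot naively transport the defining neighbourhoods. The fix—padding by $\mathbf 1 - p$—is clean because $\mathbf 1 - p$ annihilates $x$ on the relevant side, but writing it up carefully requires checking that $E(x)$ is respected and that the disk $B(\lambda, r)$ genuinely isolates $\lambda$ in all three spectra simultaneously (which follows since they coincide off $0$). Once the multiplicities match on $\sigma'(x)$, summing $\sum_{\lambda \in \sigma'(x)} \lambda\, m(\lambda, x)$ gives $\mathrm{Tr}_A(x) = \mathrm{Tr}_B(x)$ directly, completing the proof.
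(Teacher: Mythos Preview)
Your argument is correct, but it takes a different route from the paper. The paper exploits the specific hypothesis $p=p_1+\cdots+p_n$: it writes $x=pyp=\sum_{i,j}p_iyp_j$, observes that the off-diagonal pieces $p_iyp_j$ ($i\neq j$) are square-zero and hence have trace $0$ in both $A$ and $B$, and then for the diagonal pieces $p_iyp_i$ (which lie in $\mathbb Cp_i$ and have rank $\leq 1$) invokes (\ref{eq3}) and (\ref{eq4}) directly. Your approach instead proves the stronger pointwise statement $m_A(\lambda,x)=m_B(\lambda,x)$ for every $\lambda\in\sigma'(x)$, via the padding map $y_0\mapsto y_0+(\mathbf 1-p)$ that carries a perturbation near $p=\mathbf 1_B$ to one near $\mathbf 1_A$ without changing $\sigma'(\,\cdot\,x)$ (since $(\mathbf 1-p)x=0$); the Jacobson chain $\sigma_A'(yx)=\sigma_A'((pyp)x)=\sigma_B'((pyp)x)$ you wrote is valid. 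What this buys you is that your proof never uses the decomposition of $p$ into orthogonal rank one projections---it works verbatim for any finite-rank projection $p$---whereas the paper's argument is shorter but genuinely leans on that decomposition to reduce to rank $\leq 1$ pieces.
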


\begin{proof}
	We have $x = pyp = \sum_{i=1}^{n}\sum_{j=1}^{n}p_{i}yp_{j}$. So, by the properties of the trace it will suffice to show that $\mathrm{Tr}_{A}\left(p_{i}yp_{i}\right) = \mathrm{Tr}_{B}\left(p_{i}yp_{i}\right)$ for each $i \in \left\{1, \ldots, n \right\}$. But this follows immediately from (\ref{eq3}) and (\ref{eq4}).
\end{proof}

\begin{theorem}\label{2.7}
	For any linear functional $f$ on $\mathrm{Soc}\:A$ we have that
	\begin{itemize}
		\item[\textnormal{(a)}]
		$f = \alpha \mathrm{Tr}$ for some $\alpha \in \mathbb{C}$,
		\item[\textnormal{(b)}]
		$f(ab) = f(ba)$ for all $a, b \in \mathrm{Soc}\:A$, and
		\item[\textnormal{(c)}]
		There exists a real number $c > 0$ such that $\left|f(a)\right| \leq c \cdot \rho (a)$ for all $a \in \mathrm{Soc}\:A$
	\end{itemize}
	are all equivalent if and only if $\mathrm{Soc}\:A \cong M_{n} \left(\mathbb{C}\right)$.
\end{theorem}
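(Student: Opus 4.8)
The plan is to establish the biconditional by proving the two directions separately, with the "only if" direction being the substantive one. For the "if" direction, suppose $\mathrm{Soc}\,A \cong M_n(\mathbb{C})$. Then every element of the socle has rank at most $n$, so by Theorem \ref{2.1} (the equivalence (a) $\Leftrightarrow$ (c) there, applied with the constant $n$) condition (c) of the present theorem holds automatically for $f = \mathrm{Tr}$; more importantly, for an arbitrary linear functional $f$ satisfying (c) we get $|f(a)| \le c\cdot\rho(a) \le c\cdot\mathrm{rank}\,(a)\cdot\rho(a)$, so Theorem \ref{2.3} applies and gives (b) — indeed the stronger statement $f(ab)=f(ba)$ for $a\in\mathrm{Soc}\,A$, $b\in A$. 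Conversely (c) trivially follows from (a) together with the estimate $|\mathrm{Tr}\,(a)|\le\mathrm{rank}\,(a)\cdot\rho(a)\le n\cdot\rho(a)$. To close the cycle it remains to show (b) $\Rightarrow$ (a) and (c) $\Rightarrow$ (a) in this finite-dimensional setting. For (b) $\Rightarrow$ (a): by Lemma \ref{2.5}, any $f$ with $f(ab)=f(ba)$ vanishes on $\mathrm{span}\{[a,b]\} = \mathrm{Ker}\,\mathrm{Tr}$, and since $\mathrm{Ker}\,\mathrm{Tr}$ has codimension one in $M_n(\mathbb{C})$, $f$ must be a scalar multiple of $\mathrm{Tr}$. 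For (c) $\Rightarrow$ (a) one routes through (b) as just described (via Theorem \ref{2.3}) and then applies (b) $\Rightarrow$ (a).

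For the "only if" direction — assume the three conditions are all equivalent for every linear functional $f$ on $\mathrm{Soc}\,A$, and deduce $\mathrm{Soc}\,A\cong M_n(\mathbb{C})$ — the strategy is to exploit the equivalence on cleverly chosen functionals to force structural rigidity. First I would show $\mathrm{Soc}\,A$ is finite-dimensional: if not, by Theorem \ref{2.1} there is no constant $c$ with $|\mathrm{Tr}\,(a)|\le c\cdot\rho(a)$, so $\mathrm{Tr}$ itself satisfies (a) and (b) but fails (c), contradicting the assumed equivalence (note $\mathrm{Tr}$ satisfies (b) by Corollary \ref{2.4}). Hence $\mathrm{Soc}\,A$ is finite-dimensional, and by the Wedderburn–Artin discussion following Theorem \ref{2.1}, $\mathrm{Soc}\,A \cong M_{n_1}(\mathbb{C})\oplus\cdots\oplus M_{n_k}(\mathbb{C})$. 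The goal is then to show $k=1$.

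The heart of the argument is ruling out $k\ge 2$. Suppose $\mathrm{Soc}\,A = B_1\oplus\cdots\oplus B_k$ with $B_j\cong M_{n_j}(\mathbb{C})$ and $k\ge 2$. Write $\mathrm{Tr}_j$ for the trace on the summand $B_j$ (extended by zero to the other summands); each $\mathrm{Tr}_j$ is a linear functional on $\mathrm{Soc}\,A$. Using Lemma \ref{2.6} (relating the ambient trace to the subalgebra trace via the identity projections of the blocks), one sees that the ambient $\mathrm{Tr}$ restricted to $B_j$ agrees up to the embedding with $\mathrm{Tr}_j$, so $\mathrm{Tr} = \sum_j \mathrm{Tr}_j$ on $\mathrm{Soc}\,A$. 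Now consider the functional $f = \mathrm{Tr}_1$ (or more generally $f = \sum_j \gamma_j \mathrm{Tr}_j$ with not all $\gamma_j$ equal). Such an $f$ satisfies (c) — it is bounded by $\max_j|\gamma_j|$ times $\rho$, since the nonzero spectrum of an element of a block governs that block's trace — and it satisfies (b), because on each block $B_j$ it is a scalar multiple of $\mathrm{Tr}_{B_j}$ hence a trace, and cross terms $p_iyp_j$ for $i\ne j$ contribute nothing to either side (products across distinct blocks vanish). But $f = \mathrm{Tr}_1$ is visibly \emph{not} a scalar multiple of $\mathrm{Tr} = \sum_j\mathrm{Tr}_j$ when $k\ge 2$ (evaluate on a rank one projection in $B_1$ versus one in $B_2$). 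This violates the assumed equivalence (c) $\Rightarrow$ (a), so $k=1$ and $\mathrm{Soc}\,A\cong M_n(\mathbb{C})$.

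The main obstacle I anticipate is the bookkeeping in the "only if" direction: verifying carefully that a block-weighted sum of traces genuinely satisfies (b) as a functional on \emph{all} of $\mathrm{Soc}\,A$ (not just on the diagonal blocks) — this needs the observation that for $a\in\mathrm{Soc}\,A$ and $b\in A$, only the "socle part" of $b$ matters and products mixing distinct Wedderburn blocks vanish — and confirming the spectral-radius bound (c) for such a functional, which rests on the fact that the nonzero spectrum of $a=\bigoplus a_j$ is the union of the nonzero spectra of the $a_j$, so $\rho(a)=\max_j\rho(a_j)$ and $|\mathrm{Tr}_j(a)| = |\mathrm{Tr}_{B_j}(a_j)| \le n_j\rho(a_j)\le n_j\rho(a)$. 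Everything else is assembling the implications already made available by Theorems \ref{2.1} and \ref{2.3}, Corollary \ref{2.4}, and Lemmas \ref{2.5} and \ref{2.6}.
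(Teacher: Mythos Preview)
Your proposal is correct and follows essentially the same route as the paper: the same cycle (a)$\Rightarrow$(c)$\Rightarrow$(b)$\Rightarrow$(a) via Theorem~\ref{2.1}, Theorem~\ref{2.3}, and Lemma~\ref{2.5} for the ``if'' direction, and the same counterexample $f=\mathrm{Tr}_1$ on a Wedderburn summand (after invoking Theorem~\ref{2.1} to force finite-dimensionality) for the ``only if'' direction. The only cosmetic differences are that the paper obtains (b)$\Rightarrow$(a) by an explicit computation on diagonalizable elements rather than your codimension-one remark, and the paper checks only that $\mathrm{Tr}_1$ satisfies (b) (not also (c)) before observing it violates (a).
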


\begin{proof}
	For the reverse implication, by the remark preceding Lemma \ref{2.2}, and Lemma \ref{2.6}, it will suffice to prove that (a), (b) and (c) are all equivalent when $A = M_{n}\left(\mathbb{C}\right)$. Since $\mathrm{Soc}\:A$ is finite-dimensional, Theorem \ref{2.1} readily gives that (a) $\Rightarrow$ (c). Suppose now that $f$ satisfies (c). Then, in particular, $$\left|f(a)\right| \leq c \cdot \rho (a) \leq c \cdot \mathrm{rank}\,(a) \cdot \rho (a)$$ 
	for all $a \in \mathrm{Soc}\:A$. Hence, $f$ satisfies condition (b) by Theorem \ref{2.3}. This shows that (c) $\Rightarrow$ (b). Finally, if $f$ satisfies (b), then by Lemma \ref{2.5} $\mathrm{Ker}\:\mathrm{Tr} \subseteq \mathrm{Ker}\:f$. So $f$ is constant on the rank one projections of $A$, say $f(p) = \alpha$ for each rank one projection $p$. Let $a \in \mathrm{Soc}\:A-\left\{0 \right\}$ be arbitrary. By the density of $E(a)$ and the Diagonalization Theorem, it follows that $a = \lambda_{1}up_{1} + \cdots + \lambda_{m}up_{m}$, where $\lambda_{1}, \ldots, \lambda_{m} \in \mathbb{C}-\left\{0 \right\}$, $u \in G(A)$ and $p_{1}, \ldots, p_{m}$ are orthogonal rank one projections. Thus, since Corollary \ref{2.4} and the minimality of the $p_{i}$ gives $\lambda_{i}p_{i}up_{i} = \mathrm{Tr}\,\left(\lambda_{i}up_{i}\right)p_{i}$ for each $i \in \left\{1, \ldots, m  \right\}$, we obtain
	\begin{eqnarray*}
		f(a) & = & f\left(\lambda_{1}up_{1}\right) + \cdots + f\left(\lambda_{m}up_{m}\right) \\
		& = & f\left(\lambda_{1}p_{1}up_{1}\right) + \cdots + f\left(\lambda_{m}p_{m}up_{m}\right) \\
		& = & \mathrm{Tr}\,\left(\lambda_{1}up_{1}\right)f\left(p_{1}\right) + \cdots + \mathrm{Tr}\,\left(\lambda_{m}up_{m}\right)f\left(p_{m}\right) \\
		& = & \mathrm{Tr}\, \left(\lambda_{1}up_{1} + \cdots + \lambda_{m}up_{m}\right) \cdot \alpha  = \alpha \mathrm{Tr}\,(a).
	\end{eqnarray*}
	Thus, (b) $\Rightarrow$ (a), which establishes the desired equivalence. For the forward implication we note that $\left|\mathrm{Tr}\,(a)\right| \leq c \cdot \rho (a)$ for all $a \in \mathrm{Soc}\:A$, so $\mathrm{Soc}\:A$ is finite-dimensional by Theorem \ref{2.1}. Thus, by the remark preceding Lemma \ref{2.2} it follows that $\mathrm{Soc}\:A$ is isomorphic as an algebra to
	\begin{equation}
	B = M_{n_{1}}\left(\mathbb{C}\right) \oplus \cdots \oplus M_{n_{k}}\left(\mathbb{C}\right).
	\label{eq2}
	\end{equation}
	If $\mathrm{Soc}\:A = \left\{0 \right\}$, then the result is trivially true. So assume that $n_{i} \geq 1$ for each $i \in \left\{1, \ldots, n \right\}$. We claim that $B = M_{n_{1}}\left(\mathbb{C}\right)$, that is, that the direct sum above contains only one term: Suppose this is false, say $B$ appears as in (\ref{eq2}) with $k \geq 2$. Let $f : B \rightarrow \mathbb{C}$ be defined by $f\left(\left(a_{1}, \ldots, a_{k}\right)\right) = \mathrm{tr}\,\left(a_{1}\right)$, where $\mathrm{tr}$ denotes the trace in $M_{n_{1}}\left(\mathbb{C}\right)$. The linearity of $f$ on $B$ follows readily from that of $\mathrm{tr}$ on $M_{n_{1}}\left(\mathbb{C}\right)$. Also, $f \neq 0$ since $f\left(\left(\mathbf{1}, 0, \ldots, 0\right)\right) = n_{1}$. Moreover, if $a = \left(a_{1}, \ldots, a_{k}\right)$ and $b = \left(b_{1}, \ldots, b_{k}\right)$ are in $B$, then by Corollary \ref{2.4} it follows that
	$$f(ab) = \mathrm{tr}\,\left(a_{1}b_{1}\right) = \mathrm{tr}\,\left(b_{1}a_{1}\right) = f(ba).$$
	However, $f\left(\left(0, \mathbf{1}, 0, \ldots, 0\right)\right) = 0$, whereas $\mathrm{Tr}_{B}\left(\left(0, \mathbf{1}, 0, \ldots, 0\right)\right) \neq 0$. This is a contradiction, for it shows that $f \neq \alpha \mathrm{Tr}_{B}$ for all $\alpha \in \mathbb{C}$. So $B = M_{n_{1}}\left(\mathbb{C}\right)$ as advertised. This completes the proof.
\end{proof}

Remarkably it is possible to show that the condition that $f$ is constant on the rank one projections is enough to characterize the trace in general Banach algebras:

\begin{theorem}\label{2.8}
	Suppose that $\mathrm{Soc}\:A \neq \left\{0 \right\}$. For any linear functional $f$ on $\mathrm{Soc}\:A$ we have that $f = \alpha\mathrm{Tr}$ for some $\alpha \in \mathbb{C}$ if and only if $f$ is constant on the rank one projections of $A$.
\end{theorem}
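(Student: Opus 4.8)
The plan is to reduce everything to the single assertion that \emph{the rank one projections of $A$ span $\mathrm{Soc}\,A$ as a complex vector space}. Granting this, the theorem is immediate. The forward implication is trivial, since $\mathrm{Tr}\,(p)=m(1,p)=\mathrm{rank}\,(p)=1$ for every rank one projection $p$, so $f=\alpha\mathrm{Tr}$ forces $f(p)=\alpha$ for all such $p$. For the converse, if $f$ takes the constant value $\alpha$ on the rank one projections, then $f-\alpha\mathrm{Tr}$ is a linear functional on $\mathrm{Soc}\,A$ which vanishes on every rank one projection (again because $\mathrm{Tr}\,(p)=1$), hence vanishes on their span, which by the claim is all of $\mathrm{Soc}\,A$; thus $f=\alpha\mathrm{Tr}$. (The hypothesis $\mathrm{Soc}\,A\neq\{0\}$ is used only to guarantee that a rank one projection exists, so that $\alpha$ is defined.)

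To prove the spanning claim I would first show that every element of $\mathrm{Soc}\,A$ is a finite sum of rank one elements. Given a nonzero $a\in\mathrm{Soc}\,A$, the density of $E(a)$ together with the openness of $G(A)$ yields $w\in G(A)\cap E(a)$; then $wa$ has rank $\mathrm{rank}\,(a)$ while $\#\sigma'(wa)=\mathrm{rank}\,(a)$, so $wa$ is a nonzero maximal finite-rank element and the Diagonalization Theorem gives $wa=\lambda_{1}p_{1}+\cdots+\lambda_{m}p_{m}$ with $\lambda_{i}\neq 0$ and the $p_{i}$ orthogonal rank one projections. Applying $w^{-1}$ writes $a$ as a sum of the rank one elements $\lambda_{i}w^{-1}p_{i}$. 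It remains to show that every nonzero rank one element $u$ is a $\mathbb{C}$-linear combination of rank one projections. Since $\#\sigma'(u)\leq\mathrm{rank}\,(u)=1$ there are two cases. If $\sigma'(u)=\{\lambda\}$, then $\mathrm{rank}\,(u)=1=\#\sigma'(u)$, so $u$ is maximal finite-rank and the Diagonalization Theorem gives $u=\lambda p$ with $p$ a rank one projection.

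The crux is the remaining case $\sigma'(u)=\emptyset$, i.e. $u$ quasinilpotent, so that $\mathrm{Tr}\,(u)=0$; here the idea is to absorb $u$ into a rank one projection additively. Using $E(u)$ as above, pick $w\in G(A)\cap E(u)$; then $wu$ is a nonzero maximal finite-rank element, so $wu=\mu p$ with $\mu\neq 0$ and $p$ a rank one projection, whence $u=\mu w^{-1}p$. From this and $pAp=\mathbb{C}p$ one gets $u=up$ and $u^{2}=(\mu\beta)u$, where $pw^{-1}p=\beta p$; if $\mu\beta\neq 0$ then $(\mu\beta)^{-1}u$ would be a nonzero idempotent and $\mu\beta\in\sigma(u)$, contradicting quasinilpotence, so $u^{2}=0$. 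Likewise $pu=pup\in pAp=\mathbb{C}p$, and taking traces, using Corollary \ref{2.4} together with $up=u$ and $\mathrm{Tr}\,(u)=0$, forces $pu=0$. Then $q:=p+u$ satisfies $q^{2}=p^{2}+pu+up+u^{2}=p+u=q$, so $q$ is idempotent; $q\neq 0$ since otherwise $\sigma(u)=\sigma(-p)=\{0,-1\}$; and $qp=p^{2}+up=p+u=q$ gives $Aq=Aqp\subseteq Ap$, a minimal left ideal, so $Aq=Ap$ and $q$ is a rank one projection. Hence $u=q-p$ is a difference of two rank one projections, completing the proof of the claim.

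The main obstacle is precisely this last (quasinilpotent) case: one needs the right guess that the relevant projection is $p+u$, and one must assemble the facts that such a $u$ lies in a minimal left ideal (extracted from the density of $E(u)$ and the Diagonalization Theorem), that $u^{2}=0$ and $pu=0$ (from $pAp=\mathbb{C}p$ and the trace identity of Corollary \ref{2.4}), and that an idempotent generating a minimal left ideal is a rank one projection. Everything else is routine bookkeeping with the linearity of $f$ and $\mathrm{Tr}$.
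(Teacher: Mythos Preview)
Your proof is correct, and it takes a somewhat different route from the paper's. You establish the stronger structural fact that the rank one projections span $\mathrm{Soc}\,A$ as a vector space, after which the theorem is immediate by linearity. The paper instead works directly with $f$: for a rank one projection $p$ it observes that $p+px-pxp$ (and, symmetrically, $p+xp-pxp$) is again a rank one projection, so the hypothesis on $f$ yields $f(px)=f(pxp)=f(xp)$; then, writing a general $a$ as $\sum_i \lambda_i u p_i$ with $u\in G(A)$ via $E(a)$ and the Diagonalization Theorem, one computes $f(\lambda_i u p_i)=f(\lambda_i p_i u p_i)=\mathrm{Tr}(\lambda_i u p_i)\,f(p_i)=\alpha\,\mathrm{Tr}(\lambda_i u p_i)$ from minimality of $p_i$, exactly as in Theorem~\ref{2.7}. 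The core idempotent trick is really the same in both arguments: your $q=p+u$ with $up=u$, $pu=0$, $u^2=0$ is the mirror image of the paper's $p+px-pxp$ (where $v=px-pxp$ satisfies $pv=v$, $vp=0$, $v^2=0$). The difference is only in how the trick is deployed---the paper uses it to move $f$ past a rank one projection, while you use it to write a quasinilpotent rank one element as a difference of two rank one projections. Your approach buys an independent structural statement about $\mathrm{Soc}\,A$; the paper's is shorter and sidesteps the case split on $\sigma'(u)$. (Incidentally, your separate argument that $u^2=0$ is slightly redundant: once the trace identity gives $pu=\mu\beta p=0$, you have $\beta=0$ and hence $u^2=\mu\beta u=0$ for free.)
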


\begin{proof}
	The forward implication follows from the definition of the trace. For the converse, suppose that $f$ is constant on the rank one projections of $A$. Let $p$ be any rank one projection of $A$ and let $x \in A$ be arbitrary. Then $\left(p + px - pxp\right)^{2} = p + px -pxp$. Moreover, $p + px -pxp \neq 0$, for otherwise $p = \left(p + px - pxp\right)p = 0$ which is absurd. Thus, since $p + px -pxp = p\left(p + px - pxp\right)$, it follows from the properties of the rank that $\mathrm{rank}\,\left(p + px - pxp\right) = 1$. Hence, by hypothesis we have $f(p) = f\left(p + px - pxp\right)$, and so $f(px) = f(pxp)$. This, together with the fact that $f$ is constant on the rank one projections, is enough to prove that $f = \alpha \mathrm{Tr}$ for some $\alpha \in \mathbb{C}$. Indeed, simply use the argument in the proof of Theorem \ref{2.7}.
\end{proof}

The next few results will be used to characterize those socles for which conditions (a) and (b) from Theorem \ref{2.7} are equivalent. This will lead to some new insights about the trace of finite rank operators.  

\begin{lemma}\label{2.9}
	There exists a collection of two-sided ideals $\left\{J_{p} : p \in \mathcal{P} \right\}$ such that every element of  $\mathrm{Soc}\:A$ can be written as a finite sum of members of the $J_{p}$. Each $J_{p}$ has the form
	$$J_{p} = \left\{ \sum_{j=1}^{n} x_{j}py_{j} : x_{j}, y_{j} \in A, n \geq 1\;\,\mathrm{an\;integer} \right\},$$
	where $p$ is a projection with $\mathrm{rank}\,(p) \leq 1$. Moreover, the two-sided ideals are pairwise orthogonal, that is, if $p, q \in \mathcal{P}$ with $p \neq q$, then
	$$J_{p}J_{q} = J_{q}J_{p} = \left\{0 \right\}.$$
\end{lemma}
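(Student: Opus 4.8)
The plan is to build the collection $\{J_p : p \in \mathcal{P}\}$ out of the minimal left ideals of $A$, using the standard structure theory of the socle of a semisimple Banach algebra. Recall that $\mathrm{Soc}\,A$ is the sum of all minimal left ideals, and every minimal left ideal has the form $Ap$ for a minimal (equivalently rank one) projection $p$. The first step is to partition these minimal left ideals into equivalence classes by declaring $Ap \sim Aq$ when $Ap$ and $Aq$ are isomorphic as left $A$-modules; by the Wedderburn-type theory the union of a single equivalence class, together with the minimal right ideals paired with it, generates a minimal closed two-sided ideal, and distinct such ideals annihilate one another. For each class pick a representative projection $p$ and set $\mathcal{P}$ to be this set of representatives, with $J_p$ the two-sided ideal generated by $p$, i.e. the displayed set $\{\sum_{j=1}^n x_j p y_j\}$. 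One checks directly that this displayed set is indeed a two-sided ideal and that it equals $ApA$.

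Next I would verify the three assertions in turn. For the \emph{spanning} claim: any $a \in \mathrm{Soc}\,A$ lies in a finite sum of minimal left ideals $Aq_1 + \cdots + Aq_m$ (since $a$ is a finite-rank element, the Diagonalization Theorem applied to a maximal finite-rank element with the same left ideal, or directly Aupetit--Mouton's structure results, gives $a \in Aq_1 + \cdots + Aq_m$ for suitable rank one $q_i$); each $Aq_i$ sits inside the two-sided ideal $J_{p}$ corresponding to the class of $q_i$, because $q_i = q_i \cdot q_i \in Aq_i \subseteq A q_i A$ and $Aq_iA = Ap A = J_p$ when $q_i$ is equivalent to $p$ (two equivalent minimal idempotents generate the same two-sided ideal — this is where the isomorphism of left modules is converted into equality of two-sided ideals). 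Hence $a$ is a finite sum of members of the $J_p$. For the \emph{form} claim, note $\mathrm{rank}\,(p) \le 1$ is immediate since $p$ is a minimal projection (or zero, in the degenerate case $\mathrm{Soc}\,A = \{0\}$, which we may treat separately or allow $\mathcal{P} = \emptyset$).

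For the \emph{orthogonality} claim: if $p \not\sim q$ then $J_p$ and $J_q$ are distinct minimal two-sided ideals in the socle, and I would argue $J_p J_q = \{0\}$ as follows. The product $J_p J_q$ is a two-sided ideal contained in both $J_p$ and $J_q$; by minimality of each (within the socle) it is either $\{0\}$ or equal to both, and the latter would force $J_p = J_q$, contradicting $p \not\sim q$ — unless $J_pJ_q$ could be a two-sided ideal strictly smaller than $J_p$ yet nonzero, so one needs that $J_p$ has no proper nonzero two-sided subideal. This is the crux: I expect the main obstacle to be pinning down that each $J_p = ApA$ is a \emph{minimal} two-sided ideal, which amounts to showing $AxA = ApA$ for every nonzero $x \in ApA$. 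The cleanest route is: for $0 \neq x \in ApA$, use that $x$ is von Neumann regular (\cite[Corollary 2.10]{aupetitmoutontrace}), write $x = xzx$, so the idempotent $e = zx$ lies in $ApA$ and satisfies $AeA \subseteq AxA$; then reduce to showing any nonzero idempotent in $ApA$ generates $ApA$, which follows from the fact that $ApA$ modulo its (zero) radical is simple — i.e. $ApA$ is a simple ring — because it is the socle component attached to a single Wedderburn block. Alternatively, invoke \cite[Lemma 30.2]{bonsall1973complete} and the identification of minimal two-sided ideals with the closures of the $ApA$. Symmetrically $J_qJ_p = \{0\}$, completing the proof.
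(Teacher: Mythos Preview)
Your approach is different from the paper's. The paper obtains the collection by a direct Zorn's Lemma argument: take any maximal family of pairwise-orthogonal ideals of the form $J_p$, and then show maximality forces every minimal projection $q$ to lie in one of them (if not, one argues that $xpyq=0$ for all $x,y\in A$ using minimality of $Aq$, whence $J_q$ is orthogonal to every member of the family, contradicting maximality). You instead build the collection structurally, partitioning the minimal projections by the equivalence $Ap\cong Aq$ and choosing one representative per class. Both routes are sound and arrive at the same decomposition; the paper's is more self-contained and avoids invoking the module-isomorphism classification, while yours makes the Wedderburn block structure explicit from the outset.

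There is, however, a genuine gap in your orthogonality argument. You reduce $J_pJ_q=\{0\}$ to the claim that $J_p=ApA$ is a \emph{minimal} two-sided ideal, and then justify that minimality by asserting that ``$ApA$ is a simple ring because it is the socle component attached to a single Wedderburn block''---which is precisely the assertion under discussion, so the reasoning is circular. The von Neumann regularity step does produce a nonzero idempotent $e\in AxA$, but you never establish that such an $e$ must generate all of $ApA$. Fortunately, minimality is not needed for the lemma as stated. In your own setup, $p\not\sim q$ is equivalent (for minimal projections) to $pAq=\{0\}$; once you have that, $(xpy)(uqv)=xp(yu)qv=0$ for all $x,y,u,v\in A$, and hence $J_pJ_q=\{0\}$ directly. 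This is exactly the mechanism the paper uses as well, just packaged inside the maximality argument rather than the equivalence relation. If you do want minimality of $ApA$ (it is used later, in Theorem~\ref{2.12}), the clean argument is: given $0\neq x\in ApA$, pick $u\in G(A)$ with $ux$ maximal finite-rank, extract a rank-one projection $q_1$ from its diagonalization so that $q_1\in Ax\subseteq AxA$; since $q_1\in ApA$ one gets $q_1Ap\neq 0$, and minimality of $Ap$ then forces $p\in Aq_1A\subseteq AxA$.
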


\begin{proof}
	By definition a set $J_{p}$ as defined above is a two-sided ideal contained in $\mathrm{Soc}\:A$. Let $\mathcal{S}$ consist of all collections of these ideals such that the members in a collection are pairwise orthogonal. Then $\mathcal{S} \neq 0$ because (with $p = 0$) $\left\{\left\{0\right\} \right\} \in \mathcal{S}$. Partially order $\mathcal{S}$ by set containment. By Zorn's Lemma $\mathcal{S}$ has a maximal element, say $\mathpzc{M}$. We show that every element of $\mathrm{Soc}\:A$ can be written as a finite sum of elements each of which belongs to a member of $\mathpzc{M}$: By the density of the set $E(a)$ for $a \in \mathrm{Soc}\:A$ and the Diagonalization Theorem it suffices to show that we can do this for any minimal projection, say $q$. We claim that in fact $q$ belongs to some $J_{p} \in \mathpzc{M}$. Suppose this is not the case. Then for any particular $J_{p} \in \mathpzc{M}$, $q \notin J_{p}$. This implies that for each $x, y \in A$ we must have $xpyq = 0$, for otherwise, if there exists $x_{1}, y_{1} \in A$ such that $x_{1}py_{1}q \neq 0$, then by the minimality of $q$ we have $A\left(x_{1}py_{1}q\right) = Aq$, and so $zx_{1}py_{1}q = q$ for some $z \in A$. But then $q \in J_{p}$ which contradicts our assumption on $q$. So $xpyq = 0$ for all $x, y \in A$. Similarly, $qxpy = 0$ for all $x, y \in A$. If we now consider the ideal $J_{q}$, then we see that $J_{p}J_{q} = J_{q}J_{p} = \left\{0 \right\}$. But if this is true for each $J_{p}$ in $\mathpzc{M}$ then we have a contradiction with the maximality of $\mathpzc{M}$. This completes the proof.
\end{proof}

\begin{lemma}\label{2.10}
	Let $\mathpzc{M}$ be the collection of ideals obtained in the preceding lemma. If for any linear functional $f$ on $\mathrm{Soc}\:A$ we have that $f(ab) = f(ba)$ for each $a, b \in \mathrm{Soc}\:A$ implies $f = \alpha \mathrm{Tr}$ for some $\alpha \in \mathbb{C}$, then it must be the case that $\mathpzc{M}$ contains at most one nontrivial member.
\end{lemma}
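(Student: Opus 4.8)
The plan is to prove the contrapositive. So suppose $\mathpzc{M}$ contains two distinct nontrivial members $J_{p_1}$ and $J_{p_2}$; necessarily $p_1 \neq p_2$ are rank one projections, and $p_i \in J_{p_i}$ (take $x_1 = y_1 = \mathbf 1$ in the defining formula). I will exhibit a linear functional $f$ on $\mathrm{Soc}\:A$ satisfying $f(ab) = f(ba)$ for all $a, b \in \mathrm{Soc}\:A$ but with $f \neq \alpha \mathrm{Tr}$ for every $\alpha \in \mathbb C$, which contradicts the hypothesis. Morally, $f$ will be the ``partial trace'' supported on $J_{p_1}$, in direct analogy with the functional $(a_1, \ldots, a_k) \mapsto \mathrm{tr}\,(a_1)$ used to establish the forward implication of Theorem \ref{2.7}.

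First I would sharpen Lemma \ref{2.9}: the sum $\mathrm{Soc}\:A = \sum_{p \in \mathcal P} J_p$ is in fact direct. To see this, suppose $0 \neq a \in J_{p_1}$ also lies in $\sum_{j \geq 2} J_{q_j}$ for finitely many indices $q_j \in \mathcal P$ distinct from $p_1$, say $a = b_2 + \cdots + b_m$ with $b_j \in J_{q_j}$. For every $x \in \mathrm{Soc}\:A$ we have $xa \in J_{p_1}$ because $J_{p_1}$ is a two-sided ideal, so each $b_j(xa) \in J_{q_j} J_{p_1} = \{0\}$ by orthogonality, whence $axa = \sum_{j \geq 2} b_j x a = 0$. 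Taking $x$ with $a = axa$ (von Neumann regularity of socle elements, \cite[Corollary 2.10]{aupetitmoutontrace}) yields $a = 0$, a contradiction. Thus each $a \in \mathrm{Soc}\:A$ has a unique finite expansion $a = \sum_{p} a_p$ with $a_p \in J_p$, and the $J_{p_1}$-coordinate map $\pi_1 \colon \mathrm{Soc}\:A \to J_{p_1}$ is a well-defined linear map. I then set $f := \mathrm{Tr} \circ \pi_1$.

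It then remains to check three things. (i) $f \neq 0$: since $p_1 \in J_{p_1}$ we get $f(p_1) = \mathrm{Tr}\,(p_1) = \mathrm{rank}\,(p_1) = 1$. (ii) $f(ab) = f(ba)$: writing $a = \sum_p a_p$ and $b = \sum_q b_q$, the products $a_p b_q$ with $p \neq q$ vanish by orthogonality, so $ab = \sum_p a_p b_p$ with each $a_p b_p \in J_p$; hence $\pi_1(ab) = a_{p_1} b_{p_1}$, and since $a_{p_1} \in \mathrm{Soc}\:A$, Corollary \ref{2.4} gives $f(ab) = \mathrm{Tr}\,(a_{p_1} b_{p_1}) = \mathrm{Tr}\,(b_{p_1} a_{p_1}) = f(ba)$. (iii) $f$ is not a scalar multiple of $\mathrm{Tr}$: because $p_2 \in J_{p_2}$ its $J_{p_1}$-coordinate is $0$, so $f(p_2) = 0$, whereas $\mathrm{Tr}\,(p_2) = 1$; if $f = \alpha \mathrm{Tr}$ this forces $\alpha = 0$, contradicting (i). Hence $f$ is a linear functional satisfying the trace commutation identity that is not proportional to $\mathrm{Tr}$, and the contrapositive is established.

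I expect the directness of the decomposition $\bigoplus_p J_p$ to be the only step requiring any thought — everything after it is routine bookkeeping with the orthogonality relations $J_p J_q = \{0\}$ and Corollary \ref{2.4}. The crucial ingredient there is that every element of $\mathrm{Soc}\:A$ is von Neumann regular; without it one could not rule out a nonzero $a$ with $a\,(\mathrm{Soc}\:A)\,a = \{0\}$ sitting in the overlap of two of the ideals, and then the coordinate projection $\pi_1$ would not be well defined.
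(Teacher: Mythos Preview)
Your proof is correct and follows essentially the same strategy as the paper's: exhibit a partial-trace functional supported on a proper subfamily of the $J_p$, establish directness of the sum $\sum_p J_p$ so that the functional is well defined, and then verify the commutation identity via the orthogonality relations together with Corollary~\ref{2.4}. The only substantive differences are cosmetic: the paper's functional is the complement of yours (it kills one fixed $J_{p_0}$ and agrees with $\mathrm{Tr}$ on the others), and the paper proves directness by observing that $(xa)^2 \in J_p J_q = \{0\}$ for all $x \in A$ and invoking semisimplicity ($a \in \mathrm{Rad}\:A = \{0\}$) rather than von Neumann regularity.
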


\begin{proof}
	Suppose $\mathpzc{M}$ has more than one nontrivial member. Let $J_{p_{0}} \in \mathpzc{M}$ be arbitrary but not zero. Define $f$ on $\mathrm{Soc}\:A$ first implicitly as follows: $f\left(J_{p_{0}}\right) = \left\{0 \right\}$ and $f(a) = \mathrm{Tr}\,(a)$ if $a \notin J_{p_{0}}$ but $a \in J_{p}$ for some $J_{p} \in \mathpzc{M}$. In particular, $f(0)=0$. The next step is to show that $f$ is well-defined on the union of the members of $\mathpzc{M}$: It will suffice to show that distinct nonzero members of $\mathpzc{M}$ intersect only at $0$. If $0 \neq a \in J_{p} \cap J_{q}$, then we can write
	$$a = x_{1}py_{1} + \cdots +x_{n}py_{n} = u_{1}qv_{1} + \cdots +u_{k}qv_{k},$$
	and so for each $x \in A$, $xa \in J_{p} \cap J_{q}$. But notice now that $(xa)^{2} = 0$ since $J_{p}J_{q} = J_{q}J_{p} = \left\{0 \right\}$. Thus, $\sigma (xa) = \left\{0 \right\}$ so that $a \in \mathrm{Rad}\:A = \left\{0 \right\}$. This shows that $f$ is indeed well-defined on the union of the members of $\mathpzc{M}$. Now, by using a similar argument as above, it follows that every nonzero element of the socle can be written uniquely as a finite sum of nonzero elements from the members of $\mathpzc{M}$. So $f$ extends linearly to all of $\mathrm{Soc}\:A$. Now take any $a, b \in \mathrm{Soc}\:A$. Without loss of generality we can write $a = w_{p_{0}} + w_{p_{1}} + \cdots +w_{p_{n}}$ where $w_{p_{j}} \in J_{p_{j}}$ and $b = v_{p_{0}} + v_{q_{1}} + \cdots +v_{q_{n}}$ where $v_{p_{0}} \in J_{p_{0}}$ and $v_{q_{j}} \in J_{q_{j}}$. From the orthogonality of the members of $\mathpzc{M}$ and Corollary \ref{2.4} it follows that $f(ab) = f(ba)$. But it is clear that $f \neq \alpha \mathrm{Tr}$ for all $\alpha \in \mathbb{C}$ because $f\left(p_{0}\right) = 0$ and $f\left(q_{0}\right) = \mathrm{Tr}\,\left(q_{0}\right) = 1$ for some rank one projection $q_{0} \neq p_{0}$ (which exists by hypothesis). This contradiction completes the proof.  
\end{proof}

\begin{lemma}\label{2.11}
	If $pAp \not\cong M_{n} \left(\mathbb{C}\right)$ for some finite-rank projection $p$ of $A$, then $\mathpzc{M}$ contains at least two distinct members.
\end{lemma}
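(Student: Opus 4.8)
The plan is to prove the contrapositive: \emph{if $\mathpzc{M}$ contains at most one nontrivial member, then $pAp\cong M_n(\mathbb{C})$ for every nonzero finite-rank projection $p$ of $A$}. We may assume $\mathrm{Soc}\:A\neq\{0\}$, since otherwise there is no such $p$; then, by Lemma \ref{2.9}, $\mathpzc{M}$ must have at least one nontrivial member, hence exactly one, say $J_{p_0}$ with $p_0$ a rank one projection. Again by Lemma \ref{2.9}, every element of $\mathrm{Soc}\:A$ is a finite sum of elements taken from members of $\mathpzc{M}$; since the only nontrivial member is the linear subspace $J_{p_0}$ (and $\{0\}$ contributes nothing), this forces $\mathrm{Soc}\:A=J_{p_0}=Ap_0A$. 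The first key point is then that, because $p_0$ is a minimal projection, $Ap_0A$ is a minimal two-sided ideal of $A$ (a classical fact from the structure theory of the socle; see \cite{bonsall1973complete}); so $\mathrm{Soc}\:A$ is itself a minimal two-sided ideal of $A$.

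Next I would analyse the corner $B=pAp$. Each $pxp=p(xp)$ satisfies $\mathrm{rank}(pxp)\leq\mathrm{rank}(p)$, so by $(\ref{eq4})$ every element of $B$ has finite rank in $B$, i.e.\ $B=\mathrm{Soc}\:B$; since $B$ is a closed semisimple Banach algebra with identity $p$, Theorem \ref{2.1} applied to $B$ shows $B$ is finite-dimensional, and the Wedderburn--Artin Theorem then gives $B\cong M_{n_1}(\mathbb{C})\oplus\cdots\oplus M_{n_k}(\mathbb{C})$ with each $n_i\geq1$. Assume, for contradiction, that $B\not\cong M_n(\mathbb{C})$; then $k\geq2$. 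Let $e_1$ be the identity of the first summand (regarded inside $B$) and put $e_2=p-e_1$, so that $e_1,e_2$ are nonzero orthogonal projections with $e_1Be_2=e_2Be_1=\{0\}$; since $e_i=pe_i=e_ip$ we get $e_1Ae_2=e_1(pAp)e_2=e_1Be_2=\{0\}$ and likewise $e_2Ae_1=\{0\}$. Now choose a rank one projection $q_1\in e_1Ae_1$ and a rank one projection $q_2\in e_2Ae_2$; each of these corners is a nonzero finite-dimensional semisimple algebra and hence contains one, and $q_iAq_i=q_i(e_iAe_i)q_i=\mathbb{C}q_i$ shows each $q_i$ is minimal in $A$. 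Writing $q_1=q_1e_1$ and $q_2=e_2q_2$, we obtain $q_1Aq_2=q_1(e_1Ae_2)q_2=\{0\}$.

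Finally I would derive a contradiction from the existence of these two ``unlinked'' minimal projections inside the minimal ideal $\mathrm{Soc}\:A$. Since $q_1,q_2$ are nonzero members of $\mathrm{Soc}\:A$, each of $Aq_1A$ and $Aq_2A$ is a nonzero two-sided ideal of $A$ contained in $\mathrm{Soc}\:A$, so minimality forces $Aq_1A=\mathrm{Soc}\:A=Aq_2A$. On the other hand, every element of the socle is von Neumann regular \cite[Corollary 2.10]{aupetitmoutontrace}, so $a=(ax)a\in(\mathrm{Soc}\:A)^2$ for each $a\in\mathrm{Soc}\:A$, whence $(\mathrm{Soc}\:A)^2=\mathrm{Soc}\:A$. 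Combining these,
$$\mathrm{Soc}\:A=(Aq_1A)(Aq_2A)\subseteq A(q_1Aq_2)A=\{0\},$$
contradicting $\mathrm{Soc}\:A\neq\{0\}$. Hence $B=pAp\cong M_n(\mathbb{C})$, which is the required contrapositive. I expect the main obstacle to be the very first step: extracting from ``$\mathpzc{M}$ has at most one nontrivial member'', via Lemma \ref{2.9}, that $\mathrm{Soc}\:A$ collapses to a single ideal $Ap_0A$ and is therefore, by the socle structure theory, a minimal two-sided ideal of $A$; once that structural fact is in hand, the construction of $q_1,q_2$ and the concluding ideal-product computation are routine.
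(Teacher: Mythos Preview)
Your proof is correct and shares the paper's key step: from the Wedderburn--Artin decomposition of $pAp$ with $k\geq 2$ summands, extract two minimal projections $q_1,q_2$ of $A$ with $q_1Aq_2=\{0\}$, and show this is incompatible with $\mathpzc{M}$ having a single nontrivial member. The paper reaches the contradiction differently: it assumes $\mathpzc{M}=\{J_r\}$ and, by an explicit chain of minimal-ideal manipulations (e.g.\ $A(x_1ry_1q_1)=Aq_1$, then $(ry_1q_1)A=rA$), shows $r\in Aq_1A$, hence $J_r=J_{q_1}$, and symmetrically $J_r=J_{q_2}$; since $q_1Aq_2=q_2Aq_1=\{0\}$ forces $J_{q_1}\cap J_{q_2}=\{0\}$ (by the radical argument from Lemma~\ref{2.10}), this is absurd. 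You instead invoke the standard fact that $Ap_0A$ is a \emph{minimal} two-sided ideal, deduce $Aq_1A=\mathrm{Soc}\:A=Aq_2A$, and finish with the clean ideal-product computation $(\mathrm{Soc}\:A)^2=(Aq_1A)(Aq_2A)\subseteq A(q_1Aq_2)A=\{0\}$, using von Neumann regularity for the first equality. Your route is a bit more conceptual and only needs one of the two vanishing conditions ($q_1Aq_2=0$ suffices), at the cost of importing the minimality of $Ap_0A$ from \cite{bonsall1973complete}; the paper's argument is entirely self-contained and in fact reproves that minimality by hand (the same computation reappears in the forward direction of Theorem~\ref{2.12}).
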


\begin{proof}
	By the Wedderburn-Artin Theorem \cite[Theorem 2.1.2]{aupetit1991primer} 
	$$pAp \cong M_{n_{1}}\left(\mathbb{C}\right) \oplus \cdots \oplus M_{n_{k}}\left(\mathbb{C}\right)$$
	and, by assumption, we may assume at least two nonzero terms in the direct sum. This means that we can find rank one projections, say $p$ and $q$ such that $pxq = 0$ and $qxp = 0$ for all $x \in A$. Suppose that $\mathpzc{M}$ has only one nontrivial member, say $J_{r}$, where $r$ is a rank one projection. If $\left(xry\right)p=0$ for all $x, y \in A$, then certainly $p \notin J_{r}$ which is contradictive to the fact that $\mathrm{Soc}\:A = J_{r}$. So $x_{1}ry_{1}p \neq 0$ for some $x_{1}, y_{1} \in A$. By minimality of $p$, we have $A\left(x_{1}ry_{1}p\right) = Ap$, and in turn the minimality of $r$ implies that $Ary_{1}p = Ap$. So $ry_{1}p = zp$ for some $z \in A$. Again minimality of $r$ gives $\left(ry_{1}p\right)A = rA$, and so $rA = zpA$. Thus, $r \in ApA$. So, since $\mathrm{Soc}\:A = J_{r}$, we have shown that
	$$J_{r} = \left\{ \sum_{j=1}^{n} x_{j}py_{j} : x_{j}, y_{j} \in A, n \geq 1\;\,\mathrm{an\;integer} \right\} = J_{p}.$$
	Similarly, it can be shown that $J_{r} = J_{q}$. But this is not possible since $J_{p} \cap J_{q} = \left\{0 \right\}$. Thus, the lemma is proved.
\end{proof}

\begin{theorem}\label{2.12}
	For any linear functional $f$ on $\mathrm{Soc}\:A$ we have that
	\begin{itemize}
		\item[\textnormal{(a)}]
		$f = \alpha \mathrm{Tr}$ for some $\alpha \in \mathbb{C}$, and
		\item[\textnormal{(b)}]
		$f(ab) = f(ba)$ for all $a, b \in \mathrm{Soc}\:A$
	\end{itemize}
	are equivalent if and only if $\mathrm{Soc}\:A$ is a minimal two-sided ideal.
\end{theorem}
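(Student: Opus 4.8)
The plan is to reduce everything to a single structural fact about rank one (equivalently, minimal) projections of $A$, and then to feed it into Lemmas~\ref{2.9}--\ref{2.10} and Theorem~\ref{2.8}. The fact to isolate is: \emph{if $q$ is a rank one projection of $A$ then $AqA$ is a minimal two-sided ideal of $A$; moreover two rank one projections $q_{1},q_{2}$ satisfy $q_{1}Aq_{2}\neq\{0\}$ if and only if $Aq_{1}A=Aq_{2}A$, in which case there exist $c\in q_{1}Aq_{2}$ and $d\in q_{2}Aq_{1}$ with $cd=q_{1}$ and $dc=q_{2}$.} The last ``linking'' assertion is a short computation: from a nonzero $c\in q_{1}Aq_{2}$ one gets $Ac=Aq_{2}$ by minimality of the left ideal $Aq_{2}$, so $q_{2}=d_{0}c$ for some $d_{0}\in A$; then $d:=q_{2}d_{0}q_{1}$ satisfies $dc=q_{2}d_{0}c=q_{2}$ at once, while $cd=q_{1}(cd)q_{1}\in q_{1}Aq_{1}=\mathbb{C}q_{1}$ is nonzero (since $cdc=cq_{2}=c\neq0$), and comparing $(dc)d=d$ with $d(cd)$ forces $cd=q_{1}$. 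The stated equivalence then follows, because $q_{1}Aq_{2}=\{0\}$ would give $q_{1}(Aq_{1}A)=q_{1}(Aq_{2}A)=(q_{1}Aq_{2})A=\{0\}$, which is absurd as $q_{1}=q_{1}^{2}\in q_{1}(Aq_{1}A)$ whenever $Aq_{1}A=Aq_{2}A$; the converse is immediate from $cd=q_{1}$, $dc=q_{2}$.

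The substantive part is that $AqA$ is a minimal two-sided ideal. I would argue: let $\{0\}\neq I\trianglelefteq A$ with $I\subseteq AqA\subseteq\mathrm{Soc}\:A$, and pick $0\neq y\in I$. Since socle elements are von Neumann regular, $e:=zy$ (with $yzy=y$) is a nonzero idempotent lying in $I\cap\mathrm{Soc}\:A$. The corner $eAe$ is a closed semisimple Banach algebra with identity $e$, and because $e\in\mathrm{Soc}\:A$ every element of $eAe$ has finite rank with $\mathrm{rank}_{eAe}=\mathrm{rank}_{A}$ there by (\ref{eq3}) and (\ref{eq4}); hence $\mathrm{Soc}(eAe)=eAe$ is closed, so $eAe$ is finite-dimensional by Theorem~\ref{2.1}, and by Wedderburn--Artin it contains a minimal idempotent $q'$. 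One checks $q'Aq'=q'(eAe)q'=\mathbb{C}q'$, so $q'$ is a rank one projection of $A$, and $q'\in eAe\subseteq I$. Now $q'\in AqA$ forces $q'Aq\neq\{0\}$ (same argument as above), so $Aq'A=AqA$ by the linking; since $q'\in I$ this gives $AqA=Aq'A\subseteq I\subseteq AqA$, i.e.\ $I=AqA$.

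With the structural fact in hand, both implications are short. For the forward implication: assuming (a) and (b) are equivalent for every linear functional $f$ on $\mathrm{Soc}\:A$ means, since (a)$\Rightarrow$(b) always holds by Corollary~\ref{2.4}, exactly that (b)$\Rightarrow$(a) for every $f$; this is the hypothesis of Lemma~\ref{2.10}, so $\mathpzc{M}$ has at most one nontrivial member. Discarding the trivial case $\mathrm{Soc}\:A=\{0\}$, it has exactly one, $J_{p}$ with $p$ a rank one projection, and Lemma~\ref{2.9} forces $\mathrm{Soc}\:A=J_{p}=ApA$, which is a minimal two-sided ideal by the structural fact. Conversely, suppose $\mathrm{Soc}\:A$ is a minimal two-sided ideal (hence nonzero). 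As (a)$\Rightarrow$(b) is automatic, take $f$ satisfying (b); by Theorem~\ref{2.8} it suffices to show $f$ constant on the rank one projections of $A$. For rank one projections $q_{1},q_{2}$ the ideals $Aq_{1}A$ and $Aq_{2}A$ are nonzero two-sided ideals inside $\mathrm{Soc}\:A$, hence both equal $\mathrm{Soc}\:A$ by minimality; thus $Aq_{1}A=Aq_{2}A$, so $q_{1}Aq_{2}\neq\{0\}$, and the linking yields $c,d\in\mathrm{Soc}\:A$ with $cd=q_{1}$, $dc=q_{2}$. Hence $f(q_{1})=f(cd)=f(dc)=f(q_{2})$, and $f=\alpha\mathrm{Tr}$ for some $\alpha\in\mathbb{C}$ by Theorem~\ref{2.8}.

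The step I expect to be the main obstacle is the one buried in the structural fact: producing a rank one projection of $A$ inside an arbitrary nonzero two-sided ideal $I\subseteq\mathrm{Soc}\:A$. That is where von Neumann regularity of socle elements, the finite-dimensionality of the corner $eAe$ (obtained from Theorem~\ref{2.1}), and the Wedderburn--Artin theorem all have to be combined; everything else is routine arithmetic with minimal idempotents.
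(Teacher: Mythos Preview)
Your proof is correct, but it is organized differently from the paper's, and in places uses heavier tools.

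For the forward implication both proofs start identically via Lemma~\ref{2.10} to get $\mathrm{Soc}\:A=J_{p}=ApA$; the difference is in showing this ideal is minimal. The paper extracts a rank one projection from an arbitrary nonzero $a\in I$ by the density of $E(a)$ and the Diagonalization Theorem (writing $ua=\lambda_{1}p_{1}+\cdots+\lambda_{n}p_{n}$, so $p_{1}\in I$), and then invokes the computation from Lemma~\ref{2.11} to get $J_{p_{1}}=J_{p}$. You instead pass to the corner $eAe$ via von Neumann regularity, apply Theorem~\ref{2.1} to see it is finite-dimensional, and pull out a minimal idempotent by Wedderburn--Artin. Both work; the paper's route is lighter, while yours packages the conclusion as a free-standing structural fact (``$AqA$ is always a minimal two-sided ideal'') that is nice to have stated once.

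For the converse the contrast is sharper. The paper writes an arbitrary $a\in\mathrm{Soc}\:A=J_{p}$ as $\sum_{i}x_{i}py_{i}$ and computes directly: $f(x_{i}py_{i})=f\bigl((x_{i}p)(py_{i})\bigr)=f(py_{i}x_{i}p)=\mathrm{Tr}(y_{i}x_{i}p)f(p)=\mathrm{Tr}(x_{i}py_{i})f(p)$, using only (b), minimality of $p$, and Corollary~\ref{2.4}. You instead prove the ``linking'' statement $cd=q_{1}$, $dc=q_{2}$ for any two rank one projections (which, modulo organization, is the same minimal-ideal arithmetic the paper hides inside Lemma~\ref{2.11}), conclude $f(q_{1})=f(q_{2})$, and then appeal to Theorem~\ref{2.8}. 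Your route is more conceptual and reusable; the paper's is a two-line computation that avoids the detour through Theorem~\ref{2.8}. One small remark: when you say ``$q'\in AqA$ forces $q'Aq\neq\{0\}$ (same argument as above)'', the earlier argument assumed $Aq_{1}A=Aq_{2}A$, which you do not yet have; the variant you need is simply that $q'=q'\cdot q'\in q'(AqA)=(q'Aq)A$, so $q'Aq=\{0\}$ would give $q'=0$.
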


\begin{proof}
	If $\mathrm{Soc}\:A = \left\{0 \right\}$, then the result trivially holds true. So assume that $\mathrm{Soc}\:A \neq \left\{0 \right\}$. If (a) and (b) are equivalent, then by Lemma \ref{2.10} $\mathrm{Soc}\:A = J_{p}$ for some rank one projection $p$. Let $I \subseteq \mathrm{Soc}\:A$ be a two-sided ideal and suppose that $a \in I- \left\{0 \right\}$. By the density of $E(a)$ and the Diagonalization Theorem, there exist a $u \in G(A)$, orthogonal rank one projections $p_{1}, \ldots, p_{n}$ and $\lambda_{1}, \ldots, \lambda_{n} \in \mathbb{C}-\left\{0 \right\}$ such that $ua = \lambda_{1}p_{1} + \cdots +\lambda_{n}p_{n}$. Consequently, if $x, y \in A$, then $\frac{1}{\lambda_{1}}xp_{1}uay = xp_{1}y \in I$. This shows that $J_{p_{1}} \subseteq I$. But since $p_{1} \in J_{p}$, it follows by the argument in the proof of Lemma \ref{2.11} that $J_{p_{1}} = J_{p}$. So $I = \mathrm{Soc}\:A$. This proves the forward implication. Conversely, if $\mathrm{Soc}\:A$ is minimal, then $\mathrm{Soc}\:A = J_{p}$ for some rank one projection $p$. Let $a \in \mathrm{Soc}\:A$ be arbitrary. Then $a = x_{1}py_{1} + \cdots +x_{n}py_{n}$ for some $x_{1}, \ldots, x_{n}, y_{1}, \ldots, y_{n} \in A$. So if $f$ satisfies (b), then, by Corollary \ref{2.4} and the fact that $p_{i}xp_{i} = \mathrm{Tr}\,\left(xp_{i}\right)p_{i}$ for all $x \in A$, it follows that 
	\begin{eqnarray*}
		f(a)& = & f\left(x_{1}py_{1}\right) + \cdots + f\left(x_{n}py_{n}\right) \\
		& = & f\left(py_{1}x_{1}p\right) + \cdots + f\left(py_{n}x_{n}p\right) \\
		& = & \mathrm{Tr}\,\left(y_{1}x_{1}p\right)f(p) + \cdots + \mathrm{Tr}\,\left(y_{n}x_{n}p\right) f(p) \\
		& = & \left[\mathrm{Tr}\,\left(x_{1}py_{1}\right) + \cdots +\mathrm{Tr}\,\left(x_{n}py_{n}\right)\right]f(p) \\
		& = & f(p)\mathrm{Tr}\,(a).
	\end{eqnarray*}
	This shows that (b) $\Rightarrow$ (a). The implication (a) $\Rightarrow$ (b) is of course a consequence of Corollary \ref{2.4}, so we have the result.
\end{proof}

\begin{theorem}\label{2.13}
	For each linear functional $f$ on $\mathrm{Soc}\:A$ we have that
	\begin{itemize}
		\item[\textnormal{(a)}]
		$f = \alpha \mathrm{Tr}$ for some $\alpha \in \mathbb{C}$, and
		\item[\textnormal{(b)}]
		$f(ab) = f(ba)$ for all $a, b \in \mathrm{Soc}\:A$
	\end{itemize}
	are equivalent if and only if $pAp \cong M_{n_{p}} \left(\mathbb{C}\right)$ for each finite-rank projection $p$ of $A$.
\end{theorem}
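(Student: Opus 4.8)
The plan is to prove the two implications separately, leaning on the machinery of the maximal pairwise-orthogonal family $\mathpzc{M}$ of ideals supplied by Lemma~\ref{2.9}. The forward implication will be a short deduction from Lemmas~\ref{2.10} and~\ref{2.11}; the reverse implication is where the real work lies, and it turns on a direct-sum argument inside a corner subalgebra $eAe$. As in the proof of Theorem~\ref{2.12}, the case $\mathrm{Soc}\:A = \{0\}$ is trivial, so throughout I would assume $\mathrm{Soc}\:A \neq \{0\}$.

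For the forward implication, suppose (a) and (b) are equivalent for every linear functional $f$ on $\mathrm{Soc}\:A$. In particular, every $f$ satisfying (b) satisfies (a), so Lemma~\ref{2.10} gives that $\mathpzc{M}$ has at most one nontrivial member. The contrapositive of Lemma~\ref{2.11} then yields $pAp \cong M_{n_p}(\mathbb{C})$ for every finite-rank projection $p$ of $A$, as required.

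For the reverse implication, assume $pAp \cong M_{n_p}(\mathbb{C})$ for every finite-rank projection $p$. Since (a) always implies (b) by Corollary~\ref{2.4}, it suffices to establish (b)$\Rightarrow$(a), and the crux is the claim that $\mathpzc{M}$ has at most one nontrivial member. I would prove this by contradiction: suppose $J_p, J_q \in \mathpzc{M}$ are distinct and nonzero, so that $p$ and $q$ are rank one projections. Pairwise orthogonality of $\mathpzc{M}$ gives $J_pJ_q = J_qJ_p = \{0\}$, and since $p \in J_p$ and $yq \in J_q$ for every $y \in A$ (and symmetrically), this forces $pAq = qAp = \{0\}$. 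Hence $pq = qp = 0$, so $e := p+q$ is a projection, and it has finite rank because $e \in \mathrm{Soc}\:A$. Now $eAe = pAp + pAq + qAp + qAq = \mathbb{C}p + \mathbb{C}q$ by minimality of $p$ and $q$, and $p, q$ are linearly independent (as $q = \lambda p$ would force $\lambda \in \{0,1\}$, both excluded), so $eAe$ is $2$-dimensional; this contradicts $eAe \cong M_{n_e}(\mathbb{C})$, whose dimension $n_e^2$ is never $2$. Therefore $\mathpzc{M}$ has exactly one nontrivial member $J_p$ with $p$ a rank one projection, and Lemma~\ref{2.9} then forces $\mathrm{Soc}\:A = J_p$. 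At this point I would invoke the final computation in the proof of Theorem~\ref{2.12}, which uses only that $\mathrm{Soc}\:A = J_p$ for a rank one projection $p$: writing an arbitrary $a \in \mathrm{Soc}\:A$ as $\sum_j x_j p y_j$ and using Corollary~\ref{2.4} together with the identity $pxp = \mathrm{Tr}\,(xp)p$, one sees that any $f$ satisfying (b) equals $f(p)\,\mathrm{Tr}$, which is (a). (Alternatively, one may note that $\mathrm{Soc}\:A = J_p$ with $p$ rank one makes $\mathrm{Soc}\:A$ a minimal two-sided ideal, by the argument in the proof of Theorem~\ref{2.12}, and then quote that theorem directly.)

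The main obstacle will be the reverse implication, and within it the passage from the hypothesis on the corners $pAp$ to structural information about $\mathpzc{M}$: one has to spot that two orthogonal pieces of $\mathpzc{M}$ amalgamate into a single finite-rank projection $e$ whose corner $eAe$ is forced to split as $\mathbb{C}\oplus\mathbb{C}$, which is incompatible with being a full matrix algebra. The remaining ingredients — extracting $pAq = qAp = \{0\}$ from orthogonality, verifying $e^2 = e$, computing $eAe$ via minimality, and linking $\mathrm{Soc}\:A = J_p$ back to the equivalence of (a) and (b) through Theorem~\ref{2.12} — are routine bookkeeping.
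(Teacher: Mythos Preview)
Your argument is correct, and the forward implication matches the paper's (both go through Lemma~\ref{2.10} and the contrapositive of Lemma~\ref{2.11}). For the reverse implication, however, you take a genuinely different route. The paper argues \emph{locally}: given $f$ satisfying (b) and $x\in\mathrm{Soc}\:A\setminus\{0\}$, it produces a finite-rank projection $p$ with $xp=x$, uses (b) to get $f(x)=f(pxp)$, and then applies the hypothesis $pAp\cong M_{n_p}(\mathbb{C})$ together with Theorem~\ref{2.7} and Lemma~\ref{2.6} to obtain $f(x)=\alpha_{pAp}\,\mathrm{Tr}_A(x)$; finally Theorem~\ref{2.8} upgrades this pointwise relation to $f=\alpha\,\mathrm{Tr}$. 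Your argument is \emph{global}: you show directly that two distinct nontrivial members of $\mathpzc{M}$ would yield a finite-rank projection $e=p+q$ with $eAe=\mathbb{C}p\oplus\mathbb{C}q$ two-dimensional, contradicting $eAe\cong M_{n_e}(\mathbb{C})$; hence $\mathrm{Soc}\:A=J_p$ is a minimal two-sided ideal and Theorem~\ref{2.12} finishes. Your approach has the pleasant side-effect of making explicit that the corner condition ``$pAp\cong M_{n_p}(\mathbb{C})$ for all finite-rank $p$'' is equivalent to $\mathrm{Soc}\:A$ being a minimal two-sided ideal, so that Theorems~\ref{2.12} and~\ref{2.13} describe the same class of algebras; the paper's approach, on the other hand, avoids the detour through $\mathpzc{M}$ in the reverse direction and illustrates a localization technique (via Theorem~\ref{2.7} and Theorem~\ref{2.8}) that is reused in Theorem~\ref{2.15}.
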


\begin{proof}
	If (a) and (b) are equivalent, then by Theorem \ref{2.12}  it follows that $\mathpzc{M}$ contains at most one nontrivial member. So Lemma \ref{2.11} gives the forward implication. For the converse, we note that (a) $\Rightarrow$ (b) by Corollary \ref{2.4}. So suppose that $f$ satisfies condition (b). Let $x \in \mathrm{Soc}\:A-\left\{0 \right\}$ be arbitrary. By the density of $E(x)$ and the Diagonalization Theorem, there exist a $u \in G(A)$, orthogonal rank one projections $p_{1}, \ldots, p_{n}$ and $\lambda_{1}, \ldots, \lambda_{n} \in \mathbb{C}-\left\{0 \right\}$ such that $x = \lambda_{1}up_{1} + \cdots +\lambda_{n}up_{n}$. Consequently, if $p = p_{1} + \cdots +p_{n}$, then $xp = x$. Thus, by hypothesis we have that $f(x) = f(xp) = f(pxp)$. Moreover, since $B = pAp \cong M_{n_{p}} \left(\mathbb{C}\right)$, it follows from Theorem \ref{2.7} that $\left. f\right|_{B} = \alpha_{B} \mathrm{Tr}_{B}$ for some $\alpha_{B} \in \mathbb{C}$. Hence, by Lemma \ref{2.6} and Corollary \ref{2.4}, we obtain
	\begin{eqnarray*}
		f(x) = f(pxp) & = & \alpha_{B} \mathrm{Tr}_{B} \left(pxp\right) = \alpha_{B} \mathrm{Tr}_{A} \left(pxp\right) \\
		& = & \alpha_{B} \mathrm{Tr}_{A} \left(xp\right) = \alpha_{B} \mathrm{Tr}_{A} \left(x\right).
	\end{eqnarray*}
	Since $x \in \mathrm{Soc}\:A-\left\{0 \right\}$ was arbitrary, this implies in particular that $\mathrm{Ker}\:\mathrm{Tr}_{A} \subseteq \mathrm{Ker}\:f$. Consequently, $f$ is constant on the rank one projections of $A$. So, by Theorem \ref{2.8} we have that (b) $\Rightarrow$ (a). This completes the proof.
\end{proof}

Although the trace on the socles of the Banach algebras in Theorem \ref{2.12} and Theorem \ref{2.13} is characterized, up to scalar multiples, as precisely those linear functionals on the socle which are $0$ on the commutators, it is possible to characterize the trace here by other properties. First, however, we show that these properties characterize the trace when $A = M_{n} \left(\mathbb{C}\right)$: 

\begin{lemma}\label{2.14}
	Let $A = M_{n} \left(\mathbb{C}\right)$. For any linear functional $f$ on $A$ the following are equivalent: 
	\item[\textnormal{(a)}]
	$f = \alpha \mathrm{Tr}$ for some $\alpha \in \mathbb{C}$. 
	\item[\textnormal{(b)}]
	$f(a) = 0$ for each nilpotent $a \in A$.
	\item[\textnormal{(c)}]
	For each $a \in A$, $f(a) = 0$ whenever $a^{2} = 0$.
\end{lemma}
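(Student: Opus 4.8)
The plan is to establish the cycle (a) $\Rightarrow$ (b) $\Rightarrow$ (c) $\Rightarrow$ (a); the first two implications are immediate and essentially all of the content sits in (c) $\Rightarrow$ (a).

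For (a) $\Rightarrow$ (b): if $f = \alpha\mathrm{Tr}$ and $a$ is nilpotent, then $\sigma(a) = \{0\}$, so $\mathrm{Tr}\,(a) = \sum_{\lambda \in \sigma(a)}\lambda\, m(\lambda, a) = 0$ and hence $f(a) = 0$; equivalently, on $M_n(\mathbb{C})$ the spectral trace is the ordinary matrix trace, which annihilates nilpotents. The implication (b) $\Rightarrow$ (c) is trivial, since $a^2 = 0$ forces $a$ to be nilpotent.

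For (c) $\Rightarrow$ (a) I would work with the standard matrix units $e_{ij}$. Whenever $i \neq j$ one has $e_{ij}^2 = 0$, so hypothesis (c) gives $f(e_{ij}) = 0$ for all $i \neq j$; it remains to show that the diagonal values $f(e_{ii})$ all coincide. Fix indices $i \neq j$ and put $u = e_{ii} - e_{jj} + e_{ij} - e_{ji}$. This $u$ is a matrix of rank one whose (classical, equivalently spectral) trace is $1 + (-1) = 0$, and any rank-one matrix $M$ satisfies $M^2 = (\mathrm{Tr}\,M)\,M$, so $u^2 = 0$; alternatively, a one-line multiplication of matrix units gives $u^2 = 0$ directly. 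Applying (c) to $u$ and using $f(e_{ij}) = f(e_{ji}) = 0$ yields $f(e_{ii}) = f(e_{jj})$. Writing $\alpha$ for this common value, any $a = \sum_{i,j} a_{ij}e_{ij}$ satisfies $f(a) = \sum_i a_{ii}f(e_{ii}) + \sum_{i \neq j}a_{ij}f(e_{ij}) = \alpha\sum_i a_{ii} = \alpha\,\mathrm{Tr}\,(a)$, which is (a). The degenerate case $n = 1$ is vacuous and is covered by the same argument.

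I do not anticipate a genuine obstacle here; the only point needing a small idea is to exhibit a single square-zero element that couples two diagonal entries, and the rank-one, trace-zero matrix $e_{ii} - e_{jj} + e_{ij} - e_{ji}$ does precisely that. One could instead deduce (c) $\Rightarrow$ (a) from the commutator description $\mathrm{span}\{[a,b] : a,b \in A\} = \mathrm{Ker}\,\mathrm{Tr}$ of Lemma \ref{2.5} together with Theorem \ref{2.7}, since each $e_{ij}$ with $i \neq j$ and each difference $e_{ii} - e_{jj} = [e_{ij},e_{ji}]$ is a commutator lying in $\mathrm{Ker}\,f$; but the direct matrix-unit computation above is shorter and self-contained, so that is the route I would present. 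This lemma is the $M_n(\mathbb{C})$ prototype that will subsequently be lifted, via $pAp$ and Lemma \ref{2.6}, to the socle of a general semisimple Banach algebra.
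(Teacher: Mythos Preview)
Your argument is correct, and the overall structure matches the paper's: both note that (a) $\Rightarrow$ (b) $\Rightarrow$ (c) are immediate and that the content lies in (c) $\Rightarrow$ (a), and both begin (c) $\Rightarrow$ (a) by observing $e_{ij}^{2}=0$ for $i\neq j$, hence $f(e_{ij})=0$. The difference is in how the equality $f(e_{ii})=f(e_{jj})$ is obtained. You produce a single explicit square-zero element $u=e_{ii}-e_{jj}+e_{ij}-e_{ji}$ (rank one, trace zero, hence $u^{2}=0$) and apply (c) directly. The paper instead argues that for each $b\in A$ the conjugate $e^{\lambda b}e_{ij}e^{-\lambda b}$ still squares to zero, so $\lambda\mapsto f\bigl(e^{\lambda b}e_{ij}e^{-\lambda b}\bigr)$ is identically zero; differentiating at $\lambda=0$ (using that $f$ is automatically continuous on the finite-dimensional $A$, in the spirit of Theorem~\ref{2.3}) yields $f(be_{ij})=f(e_{ij}b)$ for all $b$, and then $f(e_{jj})=f(e_{ji}e_{ij})=f(e_{ij}e_{ji})=f(e_{ii})$. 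Your route is shorter and entirely elementary; the paper's route yields the intermediate commutation identity $f(be_{ij})=f(e_{ij}b)$ and keeps the proof in the same analytic style used elsewhere in the paper. Your aside about deriving the result from Lemma~\ref{2.5} is also valid and is close in spirit to what the paper actually does.
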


\begin{proof}
	It is obvious that (a) $\Rightarrow$ (b) $\Rightarrow$ (c). So it will suffice to show that (c) $\Rightarrow$ (a): For each $i, j \in \left\{1, \ldots, n \right\}$, as before we let $e_{i, j}$ denote the matrix which has zeros at each entry except at the $\left(i, j\right)$-entry where it has a $1$. Consider any $e_{i, j}$ with $i \neq j$. In particular, $e_{i, j}^{2} = 0$. Consequently, for any $b \in A$ we have $\left(e^{\lambda b}e_{i, j}e^{-\lambda b}\right)^{2} = 0$ for all $\lambda \in \mathbb{C}$. So, by hypothesis $f\left(e^{\lambda b}e_{i, j}e^{-\lambda b}\right) = 0$ for all $\lambda \in \mathbb{C}$. Thus, $\lambda \mapsto f\left(e^{\lambda b}e_{i, j}e^{-\lambda b}\right)$ is a constant function from $\mathbb{C}$ into itself. Therefore, since $f$ is automatically continuous on $A$, it follows from a similar argument as the one used in Theorem \ref{2.3} that $f\left(be_{i, j} - e_{i, j}b\right) = 0$. Thus, $f\left(be_{i, j}\right) = f\left(e_{i, j}b\right)$ for all $b \in A$. Hence,
	$$f\left(e_{j, j}\right) = f\left(e_{j, i}e_{i, j}\right) = f\left(e_{i, j}e_{j, i}\right) = f\left(e_{i, i}\right).$$
	So, $f\left(e_{1, 1}\right) = f\left(e_{j, j}\right)$ for all $j \in \left\{1, \ldots, n\right\}$ and $f \left(e_{i, j}\right) = 0$ for $i \neq j$. Thus, if $a \in A$ is given by $a=\sum_{i=1}^{n}\sum_{j=1}^{n} \lambda_{i, j}e_{i, j}$, then
	$$f(a) = f\left(e_{1, 1}\right) \left( \lambda_{1, 1} + \cdots + \lambda_{n, n}\right) = f\left(e_{1, 1}\right)\mathrm{Tr}\,(a).$$
	Hence, the lemma holds true.
\end{proof}

\begin{theorem}\label{2.15}
	Suppose that $\mathrm{Soc}\:A$ is a minimal two-sided ideal of $A$. Then for every linear functional $f$ on $\mathrm{Soc}\:A$ the following are equivalent:
	\begin{itemize}
		\item[\textnormal{(a)}]
		$f = \alpha \mathrm{Tr}$ for some $\alpha \in \mathbb{C}$. 
		\item[\textnormal{(b)}]
		$f(a) = 0$ for each nilpotent $a \in \mathrm{Soc}\:A$.
		\item[\textnormal{(c)}]
		For each $a \in \mathrm{Soc}\:A$, $f(a) = 0$ whenever $a^{2} = 0$.
		\item[\textnormal{(d)}]
		There exists a real number $c > 0$ such that $\left|f(a)\right| \leq c \cdot \mathrm{rank}\,(a) \cdot \rho (a)$ for all $a \in \mathrm{Soc}\:A$.
	\end{itemize}
\end{theorem}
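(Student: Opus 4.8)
The plan is to establish the implications $(\mathrm a)\Rightarrow(\mathrm b)\Rightarrow(\mathrm c)$ and $(\mathrm a)\Rightarrow(\mathrm d)$, and then to close the circle with $(\mathrm d)\Rightarrow(\mathrm a)$ and $(\mathrm c)\Rightarrow(\mathrm a)$. The first three are immediate: if $f=\alpha\mathrm{Tr}$, then any nilpotent element of $\mathrm{Soc}\,A$ has spectrum $\left\{0\right\}$ and is thus annihilated by $f$, which gives $(\mathrm a)\Rightarrow(\mathrm b)$; every $a\in\mathrm{Soc}\,A$ with $a^{2}=0$ is nilpotent, which gives $(\mathrm b)\Rightarrow(\mathrm c)$; and $\left|f(a)\right|=\left|\alpha\right|\left|\mathrm{Tr}\,(a)\right|\le\left(\left|\alpha\right|+1\right)\mathrm{rank}\,(a)\,\rho(a)$ gives $(\mathrm a)\Rightarrow(\mathrm d)$. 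For $(\mathrm d)\Rightarrow(\mathrm a)$ I would simply note that $(\mathrm d)$ is exactly the hypothesis of Theorem \ref{2.3}, so $f(ab)=f(ba)$ for all $a\in\mathrm{Soc}\,A$ and $b\in A$; in particular $f$ satisfies condition (b) of Theorem \ref{2.12}, and since $\mathrm{Soc}\,A$ is a minimal two-sided ideal, Theorem \ref{2.12} forces $f=\alpha\mathrm{Tr}$.

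The substantive implication is $(\mathrm c)\Rightarrow(\mathrm a)$, and the strategy is once more to verify condition (b) of Theorem \ref{2.12}, namely that $f$ annihilates $ab-ba$ for all $a,b\in\mathrm{Soc}\,A$. Fix such $a,b$. The first step is to choose a finite-rank projection $q$ of $A$ with $qaq=a$ and $qbq=b$; such a $q$ exists because $\mathrm{Soc}\,A$ admits finite-rank idempotent local units---a feature that, for a minimal two-sided ideal, reflects its being a ring of ``finite-rank transformations''---and pinning this down is the step I expect to require the most care. With $q$ in hand, $a,b\in B:=qAq$, and hence $ab$, $ba$, $ab-ba\in B$; since $q$ has finite rank, $B$ is a finite-dimensional semisimple algebra with identity $q$ (cf.\ the proof and the remarks following Theorem \ref{2.1}), so by the Wedderburn--Artin theorem $B\cong M_{n_{1}}\left(\mathbb C\right)\oplus\cdots\oplus M_{n_{k}}\left(\mathbb C\right)$.

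Now $ab-ba$ is a commutator in $B$, so under this isomorphism its component in each block $M_{n_{i}}\left(\mathbb C\right)$ is a commutator of the corresponding components and therefore has trace zero (and is $0$ outright when $n_{i}=1$). By Lemmas \ref{2.5} and \ref{2.14}, the trace-zero subspace of $M_{n_{i}}\left(\mathbb C\right)$ is the linear span of its square-zero elements; hence each component, and so $ab-ba$ itself, is a finite sum of elements $w\in B\subseteq\mathrm{Soc}\,A$ with $w^{2}=0$. By $(\mathrm c)$ each such $w$ has $f(w)=0$, so $f(ab-ba)=0$ by linearity. As $a,b$ were arbitrary, $f$ satisfies condition (b) of Theorem \ref{2.12}, and since $\mathrm{Soc}\,A$ is a minimal two-sided ideal, that theorem yields $f=\alpha\mathrm{Tr}$. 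This closes the circle and gives the equivalence of $(\mathrm a)$--$(\mathrm d)$.

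Besides the local-units step, everything above rests on machinery already available---Theorems \ref{2.3} and \ref{2.12}, the Wedderburn decomposition of $qAq$, and the matrix-algebra facts of Lemmas \ref{2.5} and \ref{2.14}---so I regard the construction of the dominating projection $q$ as the only genuine obstacle. A route that sidesteps the reduction to matrix blocks altogether uses the internal structure of a minimal two-sided ideal: write $\mathrm{Soc}\,A=J_{p}$ for a rank-one projection $p$ (as in the proof of Theorem \ref{2.12}), so that $\mathrm{Soc}\,A$ is spanned by the elements $v\phi$ with $v\in Ap$, $\phi\in pA$, and consider the nondegenerate bilinear pairing $pA\times Ap\to\mathbb C$ given by $\phi v=\langle\phi,v\rangle p$; one has $\langle\phi,v\rangle=\mathrm{Tr}\,(v\phi)$ and $(v\phi)^{2}=\langle\phi,v\rangle\,v\phi$, so $(\mathrm c)$ says precisely that $f(v\phi)=0$ whenever $\langle\phi,v\rangle=0$. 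Fixing $\phi$ (resp.\ $v$), the functional $v\mapsto f(v\phi)$ (resp.\ $\phi\mapsto f(v\phi)$) then vanishes on the kernel of $\langle\phi,\,\cdot\,\rangle$ (resp.\ $\langle\,\cdot\,,v\rangle$) and so is a scalar multiple of it; bilinearity of $f$ and nondegeneracy of the pairing make that scalar the same throughout, and linearity of $f$ and $\mathrm{Tr}$ then give $f=\alpha\mathrm{Tr}$ on $\mathrm{Soc}\,A$.
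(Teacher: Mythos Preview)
Your proposal is correct, but for the key implication $(\mathrm c)\Rightarrow(\mathrm a)$ the paper proceeds differently, and its route sidesteps precisely the ``genuine obstacle'' you identify. Rather than seeking a finite-rank projection $q$ with $qaq=a$ and $qbq=b$ (a two-sided local unit for a pair of socle elements, which indeed requires extra work not supplied in the paper), the paper fixes an arbitrary $x\in\mathrm{Soc}\,A\setminus\{0\}$ and, via the Diagonalization Theorem as in Theorem~\ref{2.13}, produces a finite-rank projection $p$ with merely $xp=x$. The passage from $x$ to $pxp$ is then achieved by the observation that $(xp-pxp)^{2}=0$, so hypothesis $(\mathrm c)$ alone gives $f(x)=f(xp)=f(pxp)$. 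At this point the paper also exploits minimality more sharply than you do: by Theorems~\ref{2.12} and~\ref{2.13}, $pAp\cong M_{n_p}(\mathbb C)$ is a \emph{single} matrix block, so Lemma~\ref{2.14} applies directly to $f|_{pAp}$ and the argument of Theorem~\ref{2.13} finishes the job. Thus the paper never needs to show $f(ab)=f(ba)$ as an intermediate step.

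Your first route, by contrast, works with the full Wedderburn decomposition of $qAq$ and the duality consequence of Lemma~\ref{2.14} that $\ker\mathrm{Tr}$ in $M_n(\mathbb C)$ is spanned by square-zero matrices; this is perfectly valid and does not require $qAq$ to be simple, but it buys that generality at the cost of having to manufacture the two-sided unit $q$. Your second route via the bilinear pairing on $pA\times Ap$ is correct and elegant---nondegeneracy follows from semisimplicity exactly as you outline, and the ``same scalar throughout'' step goes through because a vector space over $\mathbb C$ is not the union of two proper subspaces---and it avoids both the local-unit issue and any appeal to Wedderburn. Either of your arguments would serve; the paper's is simply shorter because the square-zero trick $(xp-pxp)^{2}=0$ converts a one-sided identity into what is effectively a two-sided one for free.
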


\begin{proof}
	It is clear that (a) $\Rightarrow$ (b) $\Rightarrow$ (c) and that (a) $\Rightarrow$ (d). Moreover, by Theorem \ref{2.12} and Theorem \ref{2.3} it follows that (d) $\Rightarrow$ (a). It will therefore suffice to show that (c) $\Rightarrow$ (a): Let $x \in \mathrm{Soc}\:A - \left\{0 \right\}$ be arbitrary. As in the proof of Theorem \ref{2.13} we can find a finite-rank projection $p$ such that $xp = x$. Since $\left(xp - pxp\right)^{2} = 0$, the assumption on $f$ gives $f(x) = f(xp) = f(pxp)$. Moreover, by Theorem \ref{2.12} and Theorem \ref{2.13} we have $B = pAp \cong M_{n_{p}} \left(\mathbb{C}\right)$. So, by Lemma \ref{2.14} and the argument used in the proof of Theorem \ref{2.13} we have the desired implication.
\end{proof}

As we will see after the next lemma, for any Banach space $X$, $\mathrm{Soc}\:B(X)$ is a minimal two-sided ideal.

\begin{lemma}\label{2.16}
	Let $P, Q$ be rank one projections of $B(X)$, where $X$ is some Banach space. Then there exist $S, T \in B(X)$ such that $P-Q = ST-TS$ and $S$ and $T$ are both of rank one.
\end{lemma}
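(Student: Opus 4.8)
The plan is to reduce everything to the elementary representation of rank one operators on a Banach space. \textbf{Step 1 (normal forms).} Recall that every rank one operator $R \in B(X)$ can be written as $R = \varphi \otimes v$, i.e. $Ry = \varphi(y)\,v$ for all $y \in X$, where $v \in X \setminus \{0\}$ spans the range of $R$ and $\varphi \in X^{*} \setminus \{0\}$; boundedness of $R$ forces $\varphi$ to be continuous, since the coefficient of $Ry$ along $v$ depends continuously on $y$. Applying this to the given rank one projections (which, being of rank one, are in particular nonzero), write $P = \xi \otimes x$ and $Q = \eta \otimes z$ with $x, z \in X \setminus \{0\}$ and $\xi, \eta \in X^{*} \setminus \{0\}$. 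The idempotency relations $P^{2} = P$ and $Q^{2} = Q$ then translate, evaluating on a vector not annihilated by $\xi$ respectively $\eta$, precisely into the normalizations $\xi(x) = 1$ and $\eta(z) = 1$.

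\textbf{Step 2 (exhibit $S$ and $T$).} I would then set $S = \eta \otimes x$ and $T = \xi \otimes z$. Both are of rank one: their ranges are $\mathbb{C}x$ and $\mathbb{C}z$ respectively, and the associated functionals $\eta$ and $\xi$ are nonzero. Using $\eta(z) = 1$ and $\xi(x) = 1$, a direct computation gives, for every $y \in X$,
\[
(ST)y = \eta(z)\,\xi(y)\,x = \xi(y)\,x = Py
\qquad\text{and}\qquad
(TS)y = \xi(x)\,\eta(y)\,z = \eta(y)\,z = Qy,
\]
so that $ST = P$ and $TS = Q$, whence $ST - TS = P - Q$, as required. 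This verification is insensitive to the (non-unique) choice of the normal forms, so it also subsumes the degenerate case $P = Q$, in which $S$ and $T$ come out proportional to $P$ and therefore automatically commute, giving $ST - TS = 0 = P - Q$.

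\textbf{Main obstacle.} There is essentially no substantial difficulty: the entire content lies in the bookkeeping of Step 1 --- justifying that a rank one operator on a Banach space is a tensor of a bounded functional with a vector, and that the projection property pins down $\xi(x) = \eta(z) = 1$ --- after which Step 2 is a one-line check. The only point worth flagging is that ``rank'' here is the spectral rank of the introduction, which for $A = B(X)$ coincides with the usual operator rank, so ``rank one'' genuinely means one-dimensional range; this is exactly what licenses the normal-form representation used throughout.
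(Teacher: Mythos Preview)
Your proof is correct and follows essentially the same route as the paper: represent each rank one projection as a functional--vector tensor with the normalization coming from idempotency, then swap the vectors to define $S$ and $T$ and verify $ST=P$, $TS=Q$ directly. The only cosmetic difference is your use of the tensor notation $\varphi\otimes v$ in place of the paper's explicit action $u\mapsto f(u)x$.
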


\begin{proof}
	By hypothesis there exist $x, y \in X - \left\{0 \right\}$ such that $Pu = f(u)x$ and $Qu = g(u)y$ for each $u \in X$, where $f, g \in X'$ (the dual space of $X$). Moreover, since $P^{2} = P$ and $Q^{2} = Q$, it follows that $f(x)x = x$ and $g(y)y = y$. Hence, since $x, y \in X - \left\{0 \right\}$, it follows that $f(x) = g(y) = 1$. Let $S: X \rightarrow X$ and $T: X \rightarrow X$ be defined by $Su = g(u)x$ and $Tu = f(u)y$ for each $u \in X$. Then $S, T \in B(X)$ and $S$ and $T$ are both of rank one. Furthermore,
	$$\left(ST\right)(u) = S\left(f(u)y\right) = f(u)g(y)x = f(u)x = Pu$$
	and
	$$\left(TS\right)(u) = T\left(g(u)x\right) = g(u)f(x)y = g(u)y = Qu$$
	for each $u \in X$. Thus, $P-Q = ST-TS$ as desired. 
\end{proof}

\begin{theorem}\label{2.17}
	Let $A = B(X)$ for some Banach space $X$. Then for every linear functional $f$ on $\mathrm{Soc}\:A$ the following are equivalent:
	\begin{itemize}
		\item[\textnormal{(a)}]
		$f = \alpha \mathrm{Tr}$ for some $\alpha \in \mathbb{C}$.
		\item[\textnormal{(b)}]
		$f(ab) = f(ba)$ for all $a, b \in \mathrm{Soc}\:A$.
		\item[\textnormal{(c)}]
		There exists a real number $c > 0$ such that $\left|f(a)\right| \leq c \cdot \mathrm{rank}\,(a) \cdot \rho (a)$ for all $a \in \mathrm{Soc}\:A$.
		\item[\textnormal{(d)}]
		$f(a)=0$ for each nilpotent $a \in \mathrm{Soc}\:A$.
		\item[\textnormal{(e)}]
		For each $a \in \mathrm{Soc}\:A$, $f(a) = 0$ whenever $a^{2} = 0$.
	\end{itemize}
\end{theorem}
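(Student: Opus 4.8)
The plan is to reduce the theorem to Theorem~\ref{2.15} (which already supplies the equivalence of the conditions matching (a), (c), (d), (e)) together with a short additional argument for condition (b). The one preliminary fact that has to be checked is the assertion made just before Lemma~\ref{2.16}: that $\mathrm{Soc}\:B(X)$ is a minimal two-sided ideal of $B(X)$.

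First I would prove minimality. Let $I\subseteq\mathrm{Soc}\:B(X)$ be a nonzero two-sided ideal and choose $0\neq a\in I$. By the density of $E(a)$ and the Diagonalization Theorem we may write $va=\lambda_1 p_1+\cdots+\lambda_n p_n$ with $v\in G(B(X))$, $\lambda_i\in\mathbb{C}-\{0\}$ and $p_1,\ldots,p_n$ orthogonal rank one projections; orthogonality then gives $p_1=\lambda_1^{-1}(p_1 v)a\in I$. Now let $q$ be \emph{any} rank one projection of $B(X)$ and apply Lemma~\ref{2.16} to the pair $p_1, q$: there are rank one operators $S, T\in B(X)$ with $ST=p_1$ and $TS=q$, whence
$$q=q^{2}=(TS)(TS)=T(ST)S=Tp_1S\in I,$$
so $I$ contains every rank one projection of $B(X)$. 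Finally, for an arbitrary $0\neq b\in\mathrm{Soc}\:B(X)$, the density of $E(b)$ and the Diagonalization Theorem again give $b=\mu_1 w q_1+\cdots+\mu_m w q_m$ with $w\in G(B(X))$ and $q_1,\ldots,q_m$ rank one projections, so $b\in I$. Hence $I=\mathrm{Soc}\:B(X)$, i.e. $\mathrm{Soc}\:B(X)$ is a minimal two-sided ideal.

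With this established, Theorem~\ref{2.15} applies verbatim and yields the equivalence of conditions (a), (c), (d), (e) of the present theorem (which are respectively conditions (a), (d), (b), (c) of Theorem~\ref{2.15}). It then remains only to fold in (b). The implication (a)~$\Rightarrow$~(b) is immediate from Corollary~\ref{2.4}. For (b)~$\Rightarrow$~(a): assuming $f(ab)=f(ba)$ for all $a,b\in\mathrm{Soc}\:A$, for any rank one projections $P, Q$ of $B(X)$ Lemma~\ref{2.16} supplies rank one $S, T$ with $P-Q=ST-TS$, so $f(P)-f(Q)=f(ST)-f(TS)=0$; hence $f$ is constant on the rank one projections of $B(X)$, and Theorem~\ref{2.8} gives $f=\alpha\mathrm{Tr}$ for some $\alpha\in\mathbb{C}$. (Alternatively, once $\mathrm{Soc}\:B(X)$ is known to be minimal, Theorem~\ref{2.12} delivers (b)~$\Rightarrow$~(a) directly.) This closes the cycle.

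The only genuinely new work is the minimality of $\mathrm{Soc}\:B(X)$, and within it the main obstacle is the step where an arbitrary rank one projection $q$ is shown to lie in the given ideal $I$; this is exactly what the identity $q=Tp_1S$ from Lemma~\ref{2.16} is built to do. Everything afterwards is assembly from Corollary~\ref{2.4}, Theorem~\ref{2.8}, Theorem~\ref{2.12} and Theorem~\ref{2.15}.
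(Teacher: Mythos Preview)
Your proof is correct and uses exactly the same ingredients as the paper (Lemma~\ref{2.16}, Theorem~\ref{2.8}, Theorem~\ref{2.12}, Theorem~\ref{2.15}), only organized in a different order. The paper does \emph{not} establish minimality of $\mathrm{Soc}\:B(X)$ by a direct argument; instead it first proves (a)~$\Leftrightarrow$~(b) --- with (b)~$\Rightarrow$~(a) obtained from Lemma~\ref{2.16} and Theorem~\ref{2.8} precisely as in your second argument --- and then the forward direction of Theorem~\ref{2.12} yields minimality, after which Theorem~\ref{2.15} delivers the remaining equivalences. Your route (prove minimality first, then invoke Theorems~\ref{2.15} and~\ref{2.12}) is equally valid and gives the same proof up to reshuffling. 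One small point worth flagging: in your minimality argument you write ``there are rank one operators $S, T$ with $ST=p_{1}$ and $TS=q$'', which is stronger than the \emph{statement} of Lemma~\ref{2.16} (which only asserts $P-Q=ST-TS$); you are really appealing to what the proof of Lemma~\ref{2.16} explicitly constructs, so strictly speaking you should cite the proof rather than the lemma.
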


\begin{proof}
	By Theorem \ref{2.12} and Theorem \ref{2.15} it will suffice to show that (a) and (b) are equivalent. By Corollary \ref{2.4} we know that (a) $\Rightarrow$ (b). Conversely, by Lemma \ref{2.16} it follows that $f$ is constant on the rank one projections. So by Theorem \ref{2.8} it follows that (b) $\Rightarrow$ (a), establishing the result.
\end{proof}

\bibliographystyle{amsplain}
\bibliography{Spectral}

\providecommand{\bysame}{\leavevmode\hbox to3em{\hrulefill}\thinspace}
\providecommand{\MR}{\relax\ifhmode\unskip\space\fi MR }
\providecommand{\MRhref}[2]{%
  \href{http://www.ams.org/mathscinet-getitem?mr=#1}{#2}
}
\providecommand{\href}[2]{#2}
\begin{thebibliography}{1}

\bibitem{aupetit1991primer}
B.~Aupetit, \emph{A {P}rimer {O}n {S}pectral {T}heory}, Universitext (1979),
  Springer-Verlag, 1991.

\bibitem{aupetitmoutontrace}
B.~Aupetit and H.~du~T.~Mouton, \emph{Trace and {D}eterminant in {B}anach
  {A}lgebras}, Studia Mathematica \textbf{121} (1996), 115--136.

\bibitem{bonsall1973complete}
F.F. Bonsall and J.~Duncan, \emph{Complete {N}ormed {A}lgebras}, Ergebnisse der
  Mathematik und ihrer Grenzgebiete, Springer-Verlag, 1973.

\bibitem{charactertracezem}
D.~Petz and J.~Zem\'{a}nek, \emph{Characterizations of the {T}race}, Linear
  {A}lgebra and its {A}pplications \textbf{111} (1988), 43--52.

\bibitem{puhltrace}
J.~Puhl, \emph{The {T}race of {F}inite and {N}uclear {E}lements in {B}anach
  {A}lgebras}, Czechoslovak Mathematical Journal \textbf{28} (1978), 656--676.

\bibitem{shodathm}
K.~Shoda, \emph{Einige {S}atze {\"{u}}ber {M}atrizen}, Japan {J}. {M}ath.
  \textbf{13} (1936), 361--365.

\end{thebibliography}

\end{document}